\newtheorem{Theo}{Theorem}
\newtheorem{Rem}{Remark}
 \numberwithin{equation}{section}
\journalname{}
\begin{document}

\title{Unconditional superconvergence analysis of a linearized Crank-Nicolson Galerkin FEM for generalized Ginzburg-Landau equation
\thanks{This work was supported by NSF of
China (No. 11771163) and  China Postdoctoral Science Foundation (No. 2018M632791).
}}
\titlerunning{~~}

\author{Meng Li \and Dongyang Shi$^*$ \and Junjun Wang}

\institute{M. Li \and D. Shi \and J. Wang \at
              School of Mathematics and Statistics, \\ Zhengzhou University, Zhengzhou 450001, China \\
              %Tel.: +123-45-678910\\
%              Fax: +123-45-678910\\
%              \email{limeng@zzu.edu.cn}           %  \\
%             \emph{Present address:} of F. Author  %  if needed
           \and
           D. Shi(Corresponding author) \at
              %School of Mathematics and Statistics, \\ Huazhong University of Science and
%Technology, Wuhan 430074, China \\
             \email{shi\_dy@zzu.edu.cn} }

\date{Received: date / Accepted: date}
% The correct dates will be entered by the editor

\date{Received: date / Accepted: date}
% The correct dates will be entered by the editor

\maketitle

\begin{abstract}
In this paper, a linearized Crank-Nicolson Galerkin finite element method (FEM) for generalized Ginzburg-Landau equation (GLE) is considered, in which,
the difference method in time and the standard Galerkin FEM are employed.
 Based on the linearized Crank-Nicolson difference method in time and the standard Galerkin finite element method with bilinear element in space, the time-discrete and space-time discrete systems are both constructed.
We focus on a rigorous analysis and consideration of unconditional superconvergence error estimates of the discrete schemes. Firstly, by virtue of the temporal error results, the regularity for the time-discrete system is presented.
Secondly, the classical Ritz projection is used to obtain the spatial error with order $O(h^2)$ in the sense of $L^2-$norm. Thanks to the
relationship between the Ritz projection and the interpolated projection, the superclose estimate with order $O(\tau^2 + h^2)$ in the sense of
$H^1-$norm is derived. Thirdly, it follows from the interpolated postprocessing technique that the global superconvergence result is deduced.
Finally, some numerical results are provided to confirm the theoretical
analysis.
\keywords{Nonlinear Ginzburg-Landau equation \and Finite element method \and Linearized Crank-Nicolson scheme \and Ritz projection and interpolated operators \and  Unconditional superconvergence results}
% \PACS{PACS code1 \and PACS code2 \and more}
% \subclass{MSC code1 \and MSC code2 \and more}
\end{abstract}

\section{Introduction}

In this paper, we are concerned with the numerical solution of the following generalized GLE\\
\begin{equation}\label{11}
 u_t - (\nu+i\eta)\Delta u + (\kappa+i\zeta)f(|u|^2)u - \gamma u = 0,\quad (X, t)\in \Omega\times (0, T],
\end{equation}
with the initial and Dirichlet boundary conditions
\begin{align}
  &u(X, 0)=u_0(X),\quad X\in \Omega, \label{12}\\
  &u(X,t)=0,\quad (X, t)\in \partial \Omega\times (0, T], \label{13}
\end{align}
where $X=(x,y)$, $0<T<+\infty$, $u(X, t)$ and $u_0(X)$  are complex functions, $\nu>0$, $\kappa>0$, $\eta$, $\zeta$, $\gamma$ are given real constants, and
$f(s)$ is a real-value nonlinear function, which is twice continuously differentiable with respect to $s$.
The parameter $\gamma$ is the coefficient of the linear driving term. When $\gamma<0$, all solutions decay to zero.

The NGLE plays an important role in chemistry, engineering, biology and especially in various branches of physics, from nonlinear waves to second-order phase transitions, from superconductivity, superfluidity, and Bose-Einstein condensation to liquid crystals and strings in field theory \cite{aranson2002world,lin2005wave,Ndzana2007Modulated,Kengne20092D,Tsoy2006Dynamical,Du1992Analysis}. Theoretical analysis for the NGLEs has been well done
\cite{Skarka2006Stability,Sotocrespo2012Stability}. However, the analytical expressions of the NGLEs exist only for a few particular cases \cite{Ankiewicz2008Dissipative}. Therefore, there should be significant interest in developing numerical schemes for the approximate solution of the NGLE.
Du \cite{Du1994Finite} studied the semi-discrete and implicit Euler fully-discrete approximations of the NGLEs. In \cite{Wang2011Analysis}, some finite difference schemes with the second-order convergence rate are used to solve the two-dimensional Ginzburg-Landau equation.  Mu and Huang \cite{Mu1998An}
 presented an alternating Crank-Nicolson method for the time-dependent Ginzburg-Landau model of superconductors. In \cite{Xu2011Difference},  three difference schemes of the Ginzburg-Landau Equation in two dimensions were proposed and analyzed, in which, the
nonlinear term was discretized such that the nonlinear iteration was not needed in computation.
However, due to the existence of nonlinearity, error analysis often requires some time-step grid ratio constraints, for example, $\tau=O(h^{d/2}),$ $d=2, 3$ in \cite{Du1994Finite,Mu1998An}, $\tau=O(h)$ in \cite{Xu2011Difference} and $\tau=o(h^2)$ in \cite{Wang2011Analysis}, although numerical tests show the feasibility of the numerical methods for a large time step. To overcome this problem, an error
splitting technique was proposed in \cite{Wang2014A}, and then widely developed in \cite{gao2014optimal,gao2016unconditional,li2013unconditional,li2014unconditionally,si2016unconditional,shi2017unconditional,shi2017unconditional,shi2017unconditional1,wang2014new}.

In this paper, we aim to develop the unconditional superconvergence convergence analysis technique to the generalized
 NGLE. The error function $u^n-U_h^n$ is split into the temporal error $u^n-U^n$ and the spatial error $U^n-U_h^n$.
 By use of different analytical method as \cite{shi2017unconditional}, we also obtain the $H^2-$error estimate of the temporal error with the order $O(\tau^2)$, which plays an important role in the superconvergence analysis. Then, the classical Ritz projection operator $R_h$ is introduced and
 the unconditional error estimate $\|R_hU^n-U_h^n\|_0$ with the order $O(h^2)$ is obtained, which imply that $U_h^n$ is
 unconditionally bounded in the sense of $L^\infty-$norm. Furthermore, we arrive at the superclose property of $\|R_hU^n-U_h^n\|_1$ with the order $O(\tau^2+h^2)$, combining with which and the relation between $R_h$ and the corresponding interpolation operator $I_h$ \cite{Shi2015A}, the error $\|I_hu^n-U_h^n\|_1$ with the order
 $O(\tau^2+h^2)$ holds unconditionally. Besides, the global superconvergence result is obtained by virtue of the interpolated postprocessing technique.
 Finally, some numerical results are displayed to confirm our theoretical analysis.

Throughout the paper, one denotes $(\cdot, \cdot)$ be the $L^2$ inner product function and the corresponding norm is defined by $\|\cdot\|_0:=\|\cdot\|_{L^2(\Omega)}=(\cdot, \cdot)^{1/2}$. Moreover, for any integer $m\geq 0$ and $p\in [1, +\infty]$, let
$W^{m,p}(\Omega)$ be the Sobolev spaces, equipped with the norm
\begin{equation*}
\|\upsilon\|_{m, p}=
  \left\{ \begin{aligned}
        &\bigg(\sum_{|\beta|\leq m}\int_\Omega|\mathcal{D}^{\beta}\upsilon|^pdX\bigg)^{\frac{1}{p}}, &&p\in [1, +\infty), \\
        &\sum_{|\beta|\leq m}ess \sup_{X\in \Omega}|\mathcal{D}^{\beta}\upsilon(X)|, &&p=+\infty,
                          \end{aligned} \right.
\end{equation*}
%\begin{equation}
%\|\upsilon\|_{m, p}=\begin{cases}
%\bigg(\sum_{|\beta|\leq m}\int_\Omega|\mathcal{D}^{\beta}\upsilon|^pdX\bigg)^{\frac{1}{p}}, &p\in [1, +\infty), \\
%\sum_{|\beta|\leq m}ess \sup_{X\in \Omega}|\mathcal{D}^{\beta}\upsilon(X)|,  &p=+\infty,
%\end{cases}
%\label{eq5}
%\end{equation}
where
\begin{equation*}
  \mathcal{D}^{\beta}:=\frac{\partial^{|\beta|}}{\partial x_1^{\beta_1}\cdots\partial x_d^{\beta_d}},
\end{equation*}
in which $d$ is the spatial dimension and the multi-index $\beta=(\beta_1, \ldots, \beta_d)$, $\beta_i\geq 0$ and $|\beta|=\sum_{i=1}^d\beta_i$.
When $p=2$, the Sobolev space $W^{m,p}(\Omega)$ is denoted by $H^m(\Omega)$ and the norm $\|\cdot\|_{m, p}$ is simply written as $\|\cdot\|_m$.
Let $H_0^1(\Omega):=\{\upsilon\in H^1(\Omega): v|_{\partial \Omega}=0\}$. Besides, we define the space $L^p(a, b; Y),$ $1\leq p\leq \infty$, equipped with the norm
\begin{equation*}
  \|\upsilon\|_{L^p(a,b; Y)}:=\bigg(\int_a^b\|\upsilon(\cdot, t)\|_Y^pdt\bigg)^{\frac{1}{p}},
\end{equation*}
and
\begin{equation*}
  \|\upsilon\|_{L^\infty(a,b; Y)}:=ess \sup_{t\in [a, b]}\{\|\upsilon(\cdot, t)\|_Y\}.
\end{equation*}

\section{A Linearized Galerkin FEM}

Assume that  $\Omega$ is a rectangle in $(x, y)$ plane with the edges parallel to the two coordinate axes, respectively.
Denote $\mathcal{T}_h$ be a quasiuniform partition of the rectangle $\Omega$.  For any $K\in \mathcal{T}_h$, let $h_K=diam\{K\}$ and $h=\max_{K\in \mathcal{T}_h}\{h_K\}$. Define $\mathcal{V}_h$ be the usual bilinear finite element space, and
$\mathcal{V}_{h0}:=\{v_h\in \mathcal{V}_h: v_h|_{\partial\Omega}=0\}.$ Let $R_h: H_0^1(\Omega)\rightarrow \mathcal{V}_{h0}$ be the associated
Ritz projection operator on $\mathcal{V}_{h0}$, such that
\begin{equation}\label{21}
 (\nabla (u-R_hu), \nabla v_h)=0, \quad \forall v_h\in \mathcal{V}_{h0}.
\end{equation}
From \cite{Thom2006Galerkin}, one obtains that
\begin{equation}\label{22}
  \|\nabla R_hu\|_0\leq C\|\nabla u\|_0,
\end{equation}
and
\begin{equation}\label{23}
  \|u-R_hu\|_0\leq Ch^s\|u\|_s,\quad \forall u\in H_0^1(\Omega)\cap H^2(\Omega),\quad s = 1, 2.
\end{equation}
Meanwhile, it follows from \cite{Shi2015A} that
\begin{equation}\label{24}
  \|I_hu-R_hu\|_1=O(h^2)\|u\|_3,
\end{equation}
where $I_h$ is the associated interpolation operator on $\mathcal{V}_{h0}$.

For any positive integer $N$, we let $\{t_n\}_{n=0}^N$ be a uniform partition of the time interval $[0, T]$ with the step size $\tau=T/N$. Given any sequence of function $\{w^n\}$ defined on $\Omega$, we denote
\begin{equation*}
  \delta_t w^{n}:=\frac{w^{n}-w^{n-1}}{\tau},~\delta_{tt} w^{n}:=\frac{\delta_tw^{n}-\delta_tw^{n-1}}{\tau},~\widetilde{w}^{n}:=\frac{w^{n}+w^{n-1}}{2},~~1\leq n\leq N,
\end{equation*}
and
\begin{equation*}
  \widehat{w}^n:=\frac{3}{2}w^{n-1} - \frac{1}{2}w^{n-2}, \quad 2\leq n\leq N.
\end{equation*}

With these preparations, a linearized Galerkin FEM to the system \eqref{11}-\eqref{13} is obtained, which is to find $U_h^{n}\in \mathcal{V}_{h0}$, such that for
$n\geq 2$,
\begin{equation}\label{25}
  (\delta_t U_h^n, v_h) + (\nu+i\eta)(\nabla \widetilde{U}_h^n, \nabla v_h)
  +(\kappa+i\zeta)(f(|\widehat{U}_h^n|^2)\widetilde{U}_h^n, v_h) - \gamma(\widetilde{U}_h^n, v_h)=0,~\forall v_h\in \mathcal{V}_{h0}.
\end{equation}
This scheme is not selfstarting, and the fist step value $U_h^1\in \mathcal{V}_{h0}$ is given by the following predictor corrector method,
which is to find $U_h^1\in \mathcal{V}_{h0}$, such that
\begin{eqnarray}
% \nonumber to remove numbering (before each equation)
  && \bigg(\frac{U_h^{1}-U_h^0}{\tau}, v_h\bigg)+(\nu+i\eta)\bigg(\frac{\nabla U_h^1+\nabla U_h^0}{2}, \nabla v_h\bigg)
  +(\kappa+i\zeta)\bigg(f\bigg(\bigg|\frac{U_h^{1,0}+U_h^0}{2}\bigg|^2\bigg)\frac{U_h^1+U_h^0}{2},\nonumber\\
  &&  v_h\bigg) -\gamma\bigg(\frac{U_h^1+U_h^0}{2}, v_h\bigg),\qquad \forall v_h\in \mathcal{V}_{h0},  \label{26}
\end{eqnarray}
in which, $U_h^{1,0}$ is derived by
\begin{eqnarray}
% \nonumber to remove numbering (before each equation)
  && \bigg(\frac{U_h^{1, 0}-U_h^0}{\tau}, v_h\bigg)+(\nu+i\eta)\bigg(\frac{\nabla U_h^{1, 0}+\nabla U_h^0}{2}, \nabla v_h\bigg)
  +(\kappa+i\zeta)\bigg(f(|U_h^0|^2)\frac{U_h^{1, 0}+U_h^0}{2}, v_h\bigg) \nonumber\\
  && -\gamma\bigg(\frac{U_h^{1, 0}+U_h^0}{2}, v_h\bigg),\qquad \forall v_h\in \mathcal{V}_{h0},  \label{27}
\end{eqnarray}
with the initial value $U_h^0:=I_hu_0$. Obviously, for the schemes \eqref{25}-\eqref{27}, only a linear system is needed to be solved at each time step.

\section{Error Estimates for the Time-Discrete System}

In this section, we introduce the following time-discrete system
\begin{equation}
\begin{cases}
\delta_tU^n - (\nu+i\eta)\Delta \widetilde{U}^n+(\kappa+i\zeta)f(|\hat{U}^n|^2)\widetilde{U}^n-\gamma\widetilde{U}^n=0,\quad &(X, t)\in \Omega\times (0, T],
\quad n\geq 2,  \\
U^n=0, &(X, t)\in \partial \Omega\times (0, T],\quad n\geq 1, \\
U(X, 0)=u_0(X), & X\in\Omega.
\end{cases}
\label{31}
\end{equation}
When $n=1$, $U^1$ is determined by
\begin{equation}\label{32}
  \frac{U^{1,0}-U^0}{\tau} - (\nu + i\eta)\frac{\Delta U^{1,0} + \Delta U^0}{2}+
  (\kappa+i\zeta)f(|U^0|^2)\frac{U^{1,0}+U^0}{2}-\gamma\frac{U^{1,0}+U^0}{2}=0,
\end{equation}
and
\begin{equation}\label{33}
  \frac{U^1-U^0}{\tau} - (\nu + i\eta)\frac{\Delta U^1 + \Delta U^0}{2}+
  (\kappa+i\zeta)f\bigg(\bigg|\frac{U^{1,0}+U^0}{2}\bigg|^2\bigg)\frac{U^1+U^0}{2}-\gamma\frac{U^1+U^0}{2}=0,
\end{equation}
where $U^{1,0}|_{\partial \Omega}=0$.

Obviously, the time-discrete system \eqref{31}-\eqref{33}
can be viewed as a system of linear elliptic equations, and thus the existence and
uniqueness of the solution can be obtained immediately \cite{Thom2006Galerkin}.

In what follows, let $e^n:=u^n-U^n, \quad 0\leq n\leq N$. The following theorem gives
the estimate results of $\|e^n\|_0$, $\|e^n\|_1$ and $\|e^n\|_2$, and then the regularity of $U^n$.

\begin{Theo}\label{Theo1} Let $u^n$ and $U^n$ ($0\leq n\leq N$) be the solutions of the system \eqref{11}-\eqref{13} and
the semi-discrete system \eqref{31}, respectively. Suppose $u\in L^2(0,T; H^3(\Omega))$ and $u_t,~u_{tt}\in L^\infty(0, T; H^2(\Omega))$
and
\begin{equation*}
  K_0:=1+\max_{1\leq n\leq N}\|u^n\|_{0, \infty}+\max_{1\leq n\leq N}\|\delta_tu^n\|_{0, \infty}.
\end{equation*}
Then, there exists a positive constant $\tau_0$ such that for $\tau\leq \tau_0$, we have
\begin{equation}\label{34}
  \bigg(\sum_{i=1}^n\tau\|\delta_te^i\|_0^2\bigg)^{\frac{1}{2}}+\|e^n\|_2\leq C_0\tau^2,
\end{equation}
and
\begin{equation}\label{35}
 \|\delta_tU^n\|_{0,\infty}+\|U^n\|_{0,\infty} \leq K_0, \quad \bigg\|\frac{U^{1,0}-U^0}{\tau}\bigg\|_2+\|\delta_{tt}U^n\|_2\leq C_0,
\end{equation}
where $C_0>0$ is a bounded constant.
\end{Theo}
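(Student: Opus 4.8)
**The plan is to prove Theorem \ref{Theo1} by a standard error-splitting and energy argument, proceeding inductively in $n$, and bootstrapping from an $L^2$-type estimate of $e^n$ up to the $H^2$ bound via elliptic regularity.**

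First I would derive the error equation for $e^n = u^n - U^n$. Subtracting \eqref{31} from the exact equation \eqref{11} evaluated appropriately at $t_{n-1/2}$, and using the truncation-error decomposition of the Crank--Nicolson scheme, I would write
\begin{equation*}
  \delta_t e^n - (\nu+i\eta)\Delta \widetilde{e}^n - \gamma \widetilde{e}^n + (\kappa+i\zeta)\big(f(|\hat u^n|^2)\widetilde u^n - f(|\hat U^n|^2)\widetilde U^n\big) = R^n,
\end{equation*}
where $R^n$ collects the consistency errors: the temporal truncation error from $\delta_t u^n$ versus $u_t(t_{n-1/2})$, from $\widetilde u^n$ versus $u(t_{n-1/2})$, and from $\hat u^n$ versus $u(t_{n-1/2})$ (the extrapolation error). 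Under the stated regularity $u_t, u_{tt}\in L^\infty(0,T;H^2)$, each of these is $O(\tau^2)$ in $H^2$ (in an integrated sense for the pieces involving $\int u_{tt}$), so $\sum_i \tau\|R^i\|_2^2 \le C\tau^4$. The nonlinear difference I would split as $f(|\hat u^n|^2)\widetilde u^n - f(|\hat U^n|^2)\widetilde U^n = f(|\hat u^n|^2)\widetilde e^n + \big(f(|\hat u^n|^2) - f(|\hat U^n|^2)\big)\widetilde U^n$, and on the induction hypothesis that $\|U^{n-1}\|_{0,\infty}, \|U^{n-2}\|_{0,\infty} \le K_0$ (so that $\|\hat U^n\|_{0,\infty}$ is bounded), use the $C^2$ smoothness of $f$ and the mean value theorem to bound this by $C(\|\hat e^n\|_0 + \|\widetilde e^n\|_0)$ — here $\hat e^n := \tfrac32 e^{n-1} - \tfrac12 e^{n-2}$.

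Next comes the energy step. Testing the error equation with $\widetilde{e}^n$ (in the complex $L^2$ inner product) and taking real parts kills the $i\eta$ and $i\zeta$ imaginary contributions from the principal and reaction terms, yielding $\tfrac12\delta_t\|e^n\|_0^2 + \nu\|\nabla\widetilde e^n\|_0^2 \le C(\|e^n\|_0^2 + \|e^{n-1}\|_0^2 + \|e^{n-2}\|_0^2) + C\|R^n\|_0^2$; summing over $n$ and applying the discrete Gronwall inequality gives $\|e^n\|_0^2 + \sum \tau\|\nabla\widetilde e^i\|_0^2 \le C\tau^4$, hence $\|e^n\|_0 \le C\tau^2$. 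To upgrade to $H^1$ and then the $\sum\tau\|\delta_t e^i\|_0^2$ and $H^2$ bounds, I would test instead with $\delta_t e^n$ (real part) to control $\sum\tau\|\delta_t e^i\|_0^2$ together with $\|\nabla e^n\|_0$; once $\|\delta_t e^n\|_0$ and the lower-order terms are controlled, I rewrite the error equation as an elliptic problem $-(\nu+i\eta)\Delta\widetilde e^n = R^n - \delta_t e^n + \gamma\widetilde e^n - (\kappa+i\zeta)(\cdots)$ with right-hand side bounded in $L^2$ by $C\tau^2$, and invoke $H^2$ elliptic regularity on the rectangle to conclude $\|\widetilde e^n\|_2 \le C\tau^2$, from which $\|e^n\|_2 \le C\tau^2$ follows by a telescoping/summation argument in $n$ (since $\|e^0\|_2 = 0$). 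The first-step estimates for $U^1$ from \eqref{32}--\eqref{33} are handled separately by the same energy technique, noting that a single Crank--Nicolson step started from the exact $u_0$ with a frozen-coefficient predictor retains $O(\tau^2)$ accuracy; this furnishes the base case of the induction and the bound on $\|(U^{1,0}-U^0)/\tau\|_2$.

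Finally, the regularity conclusion \eqref{35}: the $L^\infty$ bounds $\|U^n\|_{0,\infty}, \|\delta_t U^n\|_{0,\infty} \le K_0$ follow by closing the induction — $\|U^n\|_{0,\infty} \le \|u^n\|_{0,\infty} + C\|e^n\|_2 \le \|u^n\|_{0,\infty} + CC_0\tau^2$ (using the Sobolev embedding $H^2 \hookrightarrow L^\infty$ in two dimensions), which is $\le K_0$ once $\tau \le \tau_0$ is small enough, and similarly for $\delta_t U^n$ using $\delta_t e^n$ controlled in $H^2$ by the same circle of estimates; the bound $\|\delta_{tt} U^n\|_2 \le C_0$ comes from $\delta_{tt} U^n = \delta_{tt} u^n - \delta_{tt} e^n$ together with $\|\delta_{tt} u^n\|_2 \le \|u_{tt}\|_{L^\infty(0,T;H^2)}$ and the already-established second-difference control on $e^n$. \textbf{The main obstacle} I anticipate is making the induction logically airtight: the nonlinear estimate needs the $L^\infty$ bound on $U^{n-1}, U^{n-2}$, which in turn is derived from the $H^2$ error bound, which is derived from the energy estimate that uses the nonlinear estimate — so one must carefully set up a single induction hypothesis (say, "$\|e^j\|_2 \le C_0\tau^2$ for all $j \le n-1$") that simultaneously delivers the $L^\infty$ bounds and feeds back into the step-$n$ estimate, choosing $\tau_0$ and $C_0$ in the right order so the argument is not circular. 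The extrapolation term $\hat U^n$ (three-level, not present in the continuous equation) is the place where this bookkeeping is most delicate.
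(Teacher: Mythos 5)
Your overall skeleton (error equation, induction on the $L^\infty$ bound of $U^{n-1},U^{n-2}$, testing with $\widetilde e^n$ for the $L^2$ bound and with $\delta_t e^n$ for $\sum_i\tau\|\delta_t e^i\|_0^2+\|\nabla e^n\|_0^2$) matches the paper, and your first-step treatment of $U^{1,0},U^1$ is consistent with \eqref{36}--\eqref{328}. The genuine gap is in your route to the pointwise $H^2$ estimate $\|e^n\|_2\le C\tau^2$, which is the heart of \eqref{34}. You propose to read the scheme as an elliptic problem for $\widetilde e^n$ with right-hand side $R^n-\delta_t e^n+\gamma\widetilde e^n-(\kappa+i\zeta)(\cdots)$ and invoke elliptic regularity. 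But the energy estimate you have at that stage only controls $\sum_i\tau\|\delta_t e^i\|_0^2\le C\tau^4$, which gives the pointwise bound $\|\delta_t e^n\|_0\le C\tau^{3/2}$, not $C\tau^2$; so elliptic regularity yields at best $\|\widetilde e^n\|_2\le C\tau^{3/2}$. Worse, your recovery of $e^n$ from $\widetilde e^n$ ``by telescoping'' does not work: from $e^n=2\widetilde e^n-e^{n-1}$ one gets $e^n=2\sum_{k=1}^n(-1)^{n-k}\widetilde e^k+(-1)^ne^0$, and summing absolute values loses a factor $n\sim\tau^{-1}$, so even $\|\widetilde e^k\|_2\le C\tau^2$ for every $k$ would only give $\|e^n\|_2\le C\tau$. (At the first corrector step this route happens to work because there $\|e^1\|_0\le C\tau^3$, one order better, which is exactly how the paper's bounds \eqref{322}--\eqref{326} come out; the argument does not propagate to general $n$.)

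The paper closes this gap differently: it tests the error equation with $-\delta_t\Delta e^n$ (after dividing by $\nu+i\eta$), which produces the telescoping quantity $(\|\Delta e^n\|_0^2-\|\Delta e^{n-1}\|_0^2)/2\tau$ directly, and handles the otherwise uncontrollable terms $(P_1^n,\delta_t\Delta e^n)$ and $(S^n,\delta_t\Delta e^n)$ by the discrete product rule \eqref{350}, $(\Lambda^n,\delta_t\Delta e^n)=-(\delta_t\Lambda^n,\Delta e^{n-1})+\delta_t(\Lambda^n,\Delta e^n)$, so that after summation in $n$ only $\|\delta_t P_1^n\|_0$ and $\|\delta_t S^n\|_0$ are needed; these are shown to be $O(\|\delta_t\widehat e^n\|_0+\|\widehat e^{n}\|_0+\cdots)+O(\tau^2)$ and $O(\tau^2)$ in \eqref{351}--\eqref{355}, and the discrete Gronwall inequality then gives the pointwise $\|\Delta e^n\|_0\le C\tau^2$ in \eqref{360}. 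This summation-by-parts-in-time device (and the attendant estimates of the time differences of the nonlinearity and the truncation error) is the key idea missing from your proposal; without it, or some substitute giving a pointwise $O(\tau^2)$ bound on $\|\delta_t e^n\|_0$ (e.g. an energy estimate for the time-differenced error equation), your scheme of proof does not reach $\|e^n\|_2\le C\tau^2$, and hence cannot deliver the $L^\infty$ and $\|\delta_{tt}U^n\|_2$ bounds in \eqref{35} either. A minor additional point: your claim that the consistency error is $O(\tau^2)$ in the $H^2$ norm is not available under the stated regularity (and is not needed for your argument); only $L^2$ bounds of $S^n$ and of $\delta_t S^n$ are used in the paper.
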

\begin{proof}
From \eqref{11} and \eqref{32}, we obtain the following error equation
\begin{equation}\label{36}
  \frac{e^{1,0}}{\tau}-\frac{\nu+i\eta}{2}\Delta e^{1,0}+\frac{\kappa+i\zeta}{2}f(|u_0|^2)e^{1,0}
  -\frac{\gamma}{2}e^{1,0}=S^{1,0},
\end{equation}
where
\begin{eqnarray*}
% \nonumber to remove numbering (before each equation)
  S^{1,0} &:=& \bigg(\frac{u^1-u_0}{\tau}-u_t^{\frac{1}{2}}\bigg)-(\nu+i\eta)\bigg(\frac{\Delta u^1+\Delta u_0}{2}
  -\Delta u^{\frac{1}{2}}\bigg)+(\kappa+i\zeta)\bigg(f(|u_0|^2)\frac{u^1+u_0}{2}-\\
      &&f(|u^{\frac{1}{2}}|^2)u^{\frac{1}{2}}\bigg)-\gamma\bigg(\frac{u^1+u_0}{2}-u^{\frac{1}{2}}\bigg).
\end{eqnarray*}
Obviously, by Taylor's formula, we have
\begin{equation}\label{37}
  \|S^{1,0}\|_0\leq C\tau.
\end{equation}
Multiply \eqref{36} by $e^{1,0}/\tau$ and integrate the resulting equation over $\Omega$ to arrive at
\begin{equation}\label{38}
  \bigg\|\frac{e^{1,0}}{\tau}\bigg\|_0^2+\frac{\nu+i\eta}{2\tau}\|\nabla e^{1,0}\|_0^2
  +\frac{\kappa+i\zeta}{2}\bigg(f(|u_0|^2)e^{1,0}, \frac{e^{1, 0}}{\tau}\bigg)-\frac{\gamma}{2\tau}\|e^{1,0}\|_0^2=\bigg(S^{1,0}, \frac{e^{1,0}}{\tau}\bigg).
\end{equation}
Taking the real part of \eqref{38}, one obtains that
\begin{equation}\label{39}
  \bigg\|\frac{e^{1,0}}{\tau}\bigg\|_0^2+\frac{\nu}{2\tau}\|\nabla e^{1,0}\|_0^2
  +\frac{\kappa}{2}\bigg(f(|u_0|^2)e^{1,0}, \frac{e^{1, 0}}{\tau}\bigg)-\frac{\gamma}{2\tau}\|e^{1,0}\|_0^2=Re\bigg(S^{1,0}, \frac{e^{1,0}}{\tau}\bigg).
\end{equation}
Then, by virtue of Cauchy-Schwarz inequality and Young's inequality, it follows from \eqref{37} and \eqref{39} that
 \begin{equation}\label{310}
  \bigg\|\frac{e^{1,0}}{\tau}\bigg\|_0^2+\frac{\nu}{2\tau}\|\nabla e^{1,0}\|_0^2\leq C\|e^{1,0}\|_0^2
  +\frac{1}{2}\bigg\|\frac{e^{1,0}}{\tau}\bigg\|_0^2+\frac{\gamma\tau}{2}\bigg\|\frac{e^{1,0}}{\tau}\bigg\|_0^2
  +C\tau^2.
 \end{equation}
 Therefore, it exists $C_1>0$ and $\tau_0>0$, such that when $\tau\leq \tau_0$, it holds
 \begin{equation}\label{311}
   \|e^{1,0}\|_0 + \sqrt{\tau}\|\nabla e^{1,0}\|_0\leq C_1\tau^2.
 \end{equation}
In addition, multiplying \eqref{36} by $\Delta e^{1,0}/\tau$, integrating it over $\Omega$, and then taking the imaginary
part of the resulting equation, we easily obtain
\begin{equation*}
  \|\Delta e^{1,0}\|_0\leq C\tau,
\end{equation*}
which implies
\begin{equation}\label{312}
  \|e^{1,0}\|_2\leq C_2\tau.
\end{equation}
Thus, from \eqref{311} and \eqref{312}, we have
 \begin{equation}\label{313}
   \|U^{1,0}\|_{0, \infty}\leq \|e^{1,0}\|_{0, \infty}+\|u^{1}\|_{0, \infty}\leq C\|e^{1,0}\|_2 + \|u^{1}\|_{0, \infty}\leq CC_2\tau + \|u^1\|_{0, \infty}\leq K_0,
 \end{equation}
 and
 \begin{equation}\label{314}
   \bigg\|\frac{U^{1,0}-U^0}{\tau}\bigg\|_2\leq \bigg\|\frac{e^{1,0}}{\tau}\bigg\|_2
   +\bigg\|\frac{u^{1}-u_0}{\tau}\bigg\|_2\leq C_3,
 \end{equation}
 where $\tau\leq \tau_1:=1/CC_2$.

Next, from \eqref{11} and \eqref{33}, the error equation at $t=t_1$ is obtained
  \begin{equation}\label{315}
  \frac{e^{1}}{\tau}-\frac{\nu+i\eta}{2}\Delta e^{1}+(\kappa+i\zeta)P^1_1
  -\frac{\gamma}{2}e^{1}=S^{1},
\end{equation}
where
\begin{equation*}
  P_1^1:= f\bigg(\bigg|\frac{u^1+u_0}{2}\bigg|^2\bigg)\frac{u^1+u_0}{2}-f\bigg(\bigg|\frac{U^{1,0}+U^0}{2}\bigg|^2\bigg)\frac{U^1+U^0}{2},
\end{equation*}
and
\begin{eqnarray*}
% \nonumber to remove numbering (before each equation)
  S^{1} &:=& \bigg(\frac{u^1-u_0}{\tau}-u_t^{\frac{1}{2}}\bigg)-(\nu+i\eta)\bigg(\frac{\Delta u^1+\Delta u_0}{2}
  -\Delta u^{\frac{1}{2}}\bigg)+(\kappa+i\zeta)\bigg(f\bigg(\bigg|\frac{u^1+u_0}{2}\bigg|^2\bigg)\\
      &&\frac{u^1+u_0}{2}-f(|u^{\frac{1}{2}}|^2)u^{\frac{1}{2}}\bigg)-\gamma\bigg(\frac{u^1+u_0}{2}-u^{\frac{1}{2}}\bigg).
\end{eqnarray*}
Obviously, by Taylor's formula, one obtains that
\begin{equation}\label{316}
  \|S^1\|_0\leq C\tau^2.
\end{equation}
We multiply \eqref{315} by $e^{1}/\tau$ and integrate the resulting equation over $\Omega$ to arrive at
\begin{equation}\label{317}
  \bigg\|\frac{e^{1}}{\tau}\bigg\|_0^2+\frac{\nu+i\eta}{2\tau}\|\nabla e^1\|_0^2 + (\kappa+i\zeta)\bigg(P_1^1, \frac{e^1}{\tau}\bigg)
  -\frac{\gamma}{2\tau}\|e^1\|_0^2=\bigg(S^1, \frac{e^1}{\tau}\bigg).
\end{equation}
Take the real part of \eqref{317} to obtain
\begin{equation}\label{318}
  \bigg\|\frac{e^{1}}{\tau}\bigg\|_0^2+\frac{\nu}{2\tau}\|\nabla e^1\|_0^2 + Re\bigg\{(\kappa+i\zeta)\bigg(P_1^1, \frac{e^1}{\tau}\bigg)\bigg\}
  -\frac{\gamma}{2\tau}\|e^1\|_0^2=Re\bigg(S^1, \frac{e^1}{\tau}\bigg).
\end{equation}
Thanks to \eqref{313}, we have
\begin{eqnarray*}
% \nonumber to remove numbering (before each equation)
  |P_1^1| &=& \bigg |f\bigg(\bigg|\frac{u^1+u_0}{2}\bigg|^2\bigg)\frac{u^1+u_0}{2}-f\bigg(\bigg|\frac{U^{1,0}+U^0}{2}\bigg|^2\bigg)\frac{U^1+U^0}{2}\bigg|\\
    & = & \bigg |\bigg[f\bigg(\bigg|\frac{u^1+u_0}{2}\bigg|^2\bigg)-f\bigg(\bigg|\frac{U^{1,0}+U^0}{2}\bigg|^2\bigg)\bigg]\frac{u^1+u_0}{2}
    + f\bigg(\bigg|\frac{U^{1,0}+U^0}{2}\bigg|^2\bigg)\frac{e^1}{2}\bigg|   \\
    &\leq & C(|e^{1,0}|+|e^1|).
\end{eqnarray*}
Therefore, it follows from \eqref{311} that
\begin{equation}\label{319}
  \bigg|Re\bigg\{(\kappa+i\zeta)\bigg(P_1^1, \frac{e^1}{\tau}\bigg)\bigg\}\bigg|\leq C(\|e^{1,0}\|_0^2+\|e^1\|_0^2)+\frac{1}{4}\bigg\|\frac{e^{1}}{\tau}\bigg\|_0^2\leq C\|e^1\|_0^2+C\tau^4+\frac{1}{4}\bigg\|\frac{e^{1}}{\tau}\bigg\|_0^2.
\end{equation}
By virtue of Cauchy-Schwarz inequality and Young's inequality, one obtains from \eqref{316} that
\begin{equation}\label{320}
  \bigg|Re\bigg(S^1, \frac{e^1}{\tau}\bigg)\bigg|\leq C\|S^1\|_0^2 + \frac{1}{4}\bigg\|\frac{e^{1}}{\tau}\bigg\|_0^2\leq C\tau^4 + \frac{1}{4}\bigg\|\frac{e^{1}}{\tau}\bigg\|_0^2.
\end{equation}
Substituting \eqref{319}-\eqref{320} into \eqref{318} reads
\begin{equation}\label{321}
  \frac{1}{2}\bigg\|\frac{e^{1}}{\tau}\bigg\|_0^2+\frac{\nu}{2\tau}\|\nabla e^1\|_0^2\leq
  \frac{\gamma}{2\tau}\|e^1\|_0^2+C\|e^1\|_0^2+C\tau^4.
\end{equation}
Then, it is apparent to see that there exist $C_4>0$ and $\tau_2>0$, such that when $\tau\leq \tau_2$, we have
\begin{equation}\label{322}
 \|e^1\|_0 + \sqrt{\tau}\|\nabla e^1\|_0\leq C_4\tau^3.
\end{equation}
Next, we take the inner product of \eqref{315} by $\Delta e^1$ to arrive at
\begin{equation}\label{323}
  -\|\nabla e^1\|_0^2-\frac{(\nu+i\eta)\tau}{2}\|\Delta e^1\|_0^2
  +(\kappa + i\zeta)\tau(P_1^1, \Delta e^1)+\frac{\gamma\tau}{2}\|\nabla e^1\|^2_0
  = \tau(S^1, \Delta e^1).
\end{equation}
Take the imaginary part of \eqref{323} to obtain
\begin{equation}\label{324}
  -\frac{\eta}{2}\|\Delta e^1\|_0^2+Im\bigg\{(\kappa+i\zeta)(P_1^1, \Delta e^1)\bigg\}
  =Im(S^1, \Delta e^1).
\end{equation}
By using \eqref{316} and $|P_1^1|\leq C(|e^{1,0}|+|e^1|)$, we easily conclude
\begin{equation}\label{325}
  \|\Delta e^1\|_0\leq C\tau^2,
\end{equation}
which further implies that
\begin{equation}\label{326}
  \|e^1\|_2\leq C_5\tau^2.
\end{equation}
Therefore, it follows from \eqref{326} that
\begin{equation}\label{327}
  \|\delta_{tt}U^1\|_2\leq \|\delta_{tt}e^1\|_2+\|\delta_{tt}u^1\|_2\leq C_6,
\end{equation}
and
\begin{eqnarray}\label{328}
% \nonumber to remove numbering (before each equation)
  \|\delta_tU^1\|_{0,\infty}+\|U^1\|_{0,\infty} & \leq& \|\delta_te^1\|_{0,\infty}+\|\delta_tu^1\|_{0,\infty}+\|e^1\|_{0,\infty}+\|u^1\|_{0,\infty} \nonumber \\
   &\leq& C(\|\delta_te^1\|_{2}+\|e^1\|_{2})+\|\delta_tu^1\|_{0,\infty}+\|u^1\|_{0,\infty} \nonumber \\
   &\leq & CC_5(\tau+\tau^2) + \|\delta_tu^1\|_{0,\infty}+\|u^1\|_{0,\infty} \nonumber \\
   & \leq & K_0,
\end{eqnarray}
where $\tau\leq \tau_3$ and $CC_5(\tau_3+\tau_3^2)\leq 1$.

By virtue of the mathematical induction method, one assumes that \eqref{34} holds for $m\leq n-1$. Then, under this assumption, we have for $m\leq n-1$
\begin{eqnarray}\label{329}
% \nonumber to remove numbering (before each equation)
  \|\delta_tU^m\|_{0,\infty}+\|U^m\|_{0,\infty} & \leq& \|\delta_te^m\|_{0,\infty}+\|\delta_tu^m\|_{0,\infty}+\|e^m\|_{0,\infty}+\|u^m\|_{0,\infty} \nonumber \\
   &\leq& C(\|\delta_te^m\|_{2}+\|e^m\|_{2})+\|\delta_tu^m\|_{0,\infty}+\|u^m\|_{0,\infty} \nonumber \\
   &\leq & CC_0(2\tau + \tau^2) + \|\delta_tu^m\|_{0,\infty}+\|u^m\|_{0,\infty} \nonumber \\
   & \leq & K_0,
\end{eqnarray}
where $\tau\leq \tau_4$ and $CC_0(2\tau_4+\tau_4^2)\leq 1$. Now, we intend to prove that  \eqref{34} also holds for $m=n$.
From \eqref{11} and \eqref{31}, the error equation at $t=t_n$ is obtained
\begin{equation}\label{330}
  \delta_te^n-(\nu+i\eta)\Delta\widetilde{e}^n+(\kappa+i\zeta)P_1^n
  -\gamma\widetilde{e}^n=S^n,
\end{equation}
where
\begin{equation*}
  P_1^n:= f(|\widehat{u}^n|^2)\widetilde{u}^n - f(|\widehat{U}^n|^2)\widetilde{U}^n,
\end{equation*}
and
\begin{equation*}
  S^n:= \big(\delta_tu^n-u_t^{n-\frac{1}{2}}\big)-(\nu+i\eta)\Delta\big(\widetilde{u}^n
  -u^{n-\frac{1}{2}}\big)+(\kappa+i\zeta)\bigg(f(|\widehat{u}^n|^2)\widetilde{u}^n - f(|u^{n-\frac{1}{2}}|^2)u^{n-\frac{1}{2}}\bigg).
\end{equation*}
Obviously, we have
\begin{equation}\label{331}
  \|S^n\|_0\leq C\tau^2.
\end{equation}

Multiply \eqref{330} by $\widetilde{e}^n$, and integrate the resulting equation over $\Omega$ to arrive at
\begin{equation}\label{332}
  (\delta_te^n, \widetilde{e}^n)+(\nu+i\eta)\|\nabla\widetilde{e}^n\|_0^2+(\kappa+i\zeta)(P_1^n, \widetilde{e}^n)
  -\gamma\|\widetilde{e}^n\|_0^2=(S^n, \widetilde{e}^n).
\end{equation}
Taking the real part of \eqref{332}, and thanks to
\begin{equation}\label{333}
  Re(\delta_te^n, \widetilde{e}^n)=\frac{\|e^n\|_0^2-\|e^{n-1}\|_0^2}{2\tau},
\end{equation}
it follows that
\begin{equation}\label{334}
  \frac{\|e^n\|_0^2-\|e^{n-1}\|_0^2}{2\tau}+\nu\|\nabla\widetilde{e}^n\|_0^2=\gamma\|\widetilde{e}^n\|_0^2
  -Re\big\{(\kappa+i\zeta)(P_1^n, \widetilde{e}^n)\big\}+Re(S^n, \widetilde{e}^n).
\end{equation}
Then, one obtains from \eqref{329} that
%\begin{eqnarray}
%% \nonumber to remove numbering (before each equation)
% \|P_1^n\|&=&  \big\||\widehat{u}^n|^2\widetilde{u}^n-|\widehat{U}^n|^2\widetilde{U}^n\big\| \nonumber\\
%   &\leq& \big\|(|\widehat{u}^n|^2-|\widehat{U}^n|^2)\widetilde{u}^n\big\|
%   +\big\||\widehat{U}^n|^2\widetilde{e}^n\big\|\nonumber\\
%  &\leq& C\big(\|e^{n-2}\|_0+\|e^{n-1}\|_0+\|e^{n}\|_0\big) \label{335}
%\end{eqnarray}
\begin{equation}\label{335}
\|P_1^n\|_0\leq \big\|(f(|\widehat{u}^n|^2)-f(|\widehat{U}^n|^2))\widetilde{u}^n\big\|
   +\big\|f(|\widehat{U}^n|^2)\widetilde{e}^n\big\|\leq C\big(\|e^{n}\|_0+\|e^{n-1}\|_0+\|e^{n-2}\|_0\big).
\end{equation}
From \eqref{331}, \eqref{334} and \eqref{335}, we have
\begin{equation}\label{336}
  \|e^n\|_0^2\leq \|e^1\|_0^2+C\tau\sum_{i=0}^n\|e^i\|_0^2+C\tau^4.
\end{equation}
By applying \eqref{322} and the discrete Gronwall's inequality, we derive that
\begin{equation}\label{337}
  \|e^n\|_0\leq C_6\tau^2.
\end{equation}

Next, we multiply \eqref{330} by $\delta_t{e}^n$, and integrate the resulting equation over $\Omega$ to get
\begin{equation}\label{338}
  \frac{1}{\nu+i\eta}\|\delta_te^n\|_0^2-(\Delta \widetilde{e}^n, \delta_t e^n)
  +\frac{\kappa+i\zeta}{\nu+i\eta}(P_1^n, \delta_t e^n)-\frac{\gamma}{\nu+i\eta}(\widetilde{e}^n, \delta_t e^n)
  =\frac{1}{\nu+i\eta}(S^n, \delta_t e^n).
\end{equation}
Take the real part of \eqref{338} to arrive at
\begin{eqnarray}
% \nonumber to remove numbering (before each equation)
  &&\frac{\nu}{\nu^2+\eta^2}\|\delta_te^n\|_0^2+\frac{\|\nabla e^n\|^2-\|\nabla e^{n-1}\|^2}{2\tau} \nonumber\\
  && = -Re\bigg\{\frac{\kappa+i\zeta}{\nu+i\eta}(P_1^n, \delta_t e^n)\bigg\}+Re\bigg\{\frac{\gamma}{\nu+i\eta}(\widetilde{e}^n, \delta_t e^n)\bigg\}+Re\bigg\{\frac{1}{\nu+i\eta}(S^n, \delta_t e^n)\bigg\}.\nonumber\\
  && \label{339}
\end{eqnarray}
We now analyze the three terms at the right hand of \eqref{339}. For the first term, it follows that
\begin{equation}\label{340}
  \bigg|Re\bigg\{\frac{\kappa+i\zeta}{\nu+i\eta}(P_1^n, \delta_t e^n)\bigg\}\bigg|\leq
  \frac{\sqrt{\kappa^2+\zeta^2}}{\sqrt{\nu^2+\eta^2}}\bigg(\frac{1}{4\epsilon_1}\|P_1^n\|^2_0+\epsilon_1\|\delta_t e^n\|^2_0\bigg).
\end{equation}
Then, for the second and third terms, we have
\begin{equation}\label{341}
  \bigg|Re\bigg\{\frac{\gamma}{\nu+i\eta}(\widetilde{e}^n, \delta_t e^n)\bigg\}\bigg\}\bigg|\leq
  \frac{|\gamma|}{\sqrt{\nu^2+\eta^2}}\bigg(\frac{1}{4\epsilon_2}\|\widetilde{e}^n\|^2_0+\epsilon_2\|\delta_t e^n\|^2_0\bigg),
\end{equation}
and
\begin{equation}\label{342}
  \bigg|Re\bigg\{\frac{1}{\nu+i\eta}(R^n, \delta_t e^n)\bigg\}\bigg|\leq
  \frac{1}{\sqrt{\nu^2+\eta^2}}\bigg(\frac{1}{4\epsilon_3}\|S^n\|^2_0+\epsilon_3\|\delta_t e^n\|^2_0\bigg).
\end{equation}
Take appropriate $\epsilon_1$, $\epsilon_2$ and $\epsilon_3$, such that
\begin{equation}\label{343}
  \frac{\sqrt{\kappa^2+\zeta^2}}{\sqrt{\nu^2+\eta^2}}\epsilon_1+ \frac{|\gamma|}{\sqrt{\nu^2+\eta^2}}\epsilon_2+\frac{1}{\sqrt{\nu^2+\eta^2}}\epsilon_3
  =\frac{\nu}{2(\nu^2+\eta^2)}.
\end{equation}
Then, it follows from \eqref{331}, \eqref{335}, \eqref{337}, \eqref{339}-\eqref{342} that
\begin{equation}\label{344}
  \|\delta_te^n\|_0^2+\frac{\|\nabla e^n\|^2-\|\nabla e^{n-1}\|^2}{\tau}\leq C(\|\nabla e^n\|_0^2+\|\nabla e^{n-1}\|_0^2+\|\nabla e^{n-2}\|_0^2)
  +C\tau^4.
\end{equation}
Summing \eqref{344} from $2$ to $n$ yields that
\begin{equation}\label{345}
  \sum_{i=2}^n\tau\|\delta_te^i\|_0^2+\|\nabla e^n\|_0^2\leq C\tau \sum_{i=0}^n\|\nabla e^i\|_0^2 + C\tau^4
  \leq C\tau \sum_{i=2}^n\bigg(\sum_{j=2}^i\tau\|\delta_te^j\|_0^2+\|\nabla e^i\|_0^2\bigg)+C\tau^4,
\end{equation}
which reduces to (via discrete Gronwall's inequality)
\begin{equation}\label{346}
  \bigg(\sum_{i=2}^n\tau\|\delta_te^i\|_0^2\bigg)^{\frac{1}{2}}+\|\nabla e^n\|_0\leq C_7\tau^2.
\end{equation}
From \eqref{322}, we also have
\begin{equation}\label{eq347}
  \bigg(\sum_{i=1}^n\tau\|\delta_te^i\|_0^2\bigg)^{\frac{1}{2}}+\|\nabla e^n\|_0\leq C_7\tau^2.
\end{equation}

In what follows, we multiply \eqref{330} by $-\delta_t \Delta e^n$, and integrate it over $\Omega$ to obtain that
%\begin{equation}\label{347}
%
%  +\frac{\kappa+i\zeta}{\nu+i\eta}(P_1^n, \delta_t e^n)-\frac{\gamma}{\nu+i\eta}(\widetilde{e}^n, \delta_t e^n)
%  =\frac{1}{\nu+i\eta}(S^n, \delta_t e^n).
%\end{equation}
\begin{eqnarray}\label{347}
% \nonumber to remove numbering (before each equation)
  &&  \frac{1}{\nu+i\eta}\|\delta_t\nabla e^n\|_0^2+(\Delta \widetilde{e}^n, \delta_t \Delta e^n) \nonumber\\
   && = \frac{\kappa+i\zeta}{\nu+i\eta}(P_1^n, \delta_t \Delta e^n)+\frac{\gamma}{\nu+i\eta}(\nabla\widetilde{e}^n, \delta_t \nabla e^n)
   - \frac{1}{\nu+i\eta}(S^n, \delta_t \Delta e^n).
\end{eqnarray}
Take the real part of \eqref{347} to arrive at
\begin{eqnarray}\label{348}
% \nonumber to remove numbering (before each equation)
  &&  \frac{\nu}{\nu^2+\eta^2}\|\delta_t\nabla e^n\|_0^2+\frac{\|\Delta e^n\|_0^2-\|\Delta e^{n-1}\|_0^2}{2\tau} \nonumber\\
   && = Re\bigg\{\frac{\kappa+i\zeta}{\nu+i\eta}(P_1^n, \delta_t \Delta e^n)\bigg\}+Re\bigg\{\frac{\gamma}{\nu+i\eta}(\nabla\widetilde{e}^n, \delta_t \nabla e^n)\bigg\}
   - Re\bigg\{\frac{1}{\nu+i\eta}(S^n, \delta_t \Delta e^n)\bigg\}.\nonumber\\
   &&
\end{eqnarray}
For the second term at the right hand of \eqref{348}, by virtue of the Cauchy-Schwarz inequality and Young's inequality, one easily obtains that
\begin{equation}\label{349}
  \bigg|Re\bigg\{\frac{\gamma}{\nu+i\eta}(\nabla\widetilde{e}^n, \delta_t \nabla e^n)\bigg\}\bigg|\leq
  \frac{|\gamma|}{\sqrt{\nu^2+\eta^2}}\bigg(\frac{1}{4\epsilon_4}\|\nabla\widetilde{e}^n\|^2_0+\epsilon_4\|\delta_t \nabla e^n\|^2_0\bigg).
\end{equation}
As to the first and third terms at the right hand of \eqref{348}, one needs to transfer $\tau$ from one part of the inner product to the other. To this end, we assume $\widehat{u}^1=\widetilde{u}^1$,
 $\widehat{U}^1=\widetilde{U}^1$ and $\widehat{e}^1=\widetilde{e}^1$. Rewrite $(\Lambda^n, \delta_t \Delta e^n)$ $(\Lambda^n = P_1^n$ or $S^n)$ as
    \begin{equation}\label{350}
      (\Lambda^n, \delta_t \Delta e^n) = - (\delta_t\Lambda^n, \Delta e^{n-1}) + \delta_t (\Lambda^n, \Delta e^n).
    \end{equation}
As the result of the mathematical induction \eqref{329}, we easily deduce that
\begin{eqnarray}\label{351}
% \nonumber to remove numbering (before each equation)
 \|\delta_tP_1^n\|_0 &\leq& C\big(\|\delta_t\widetilde{e}^n\|_0 + \|\widehat{e}^{n-1}\|_0+\|\widetilde{e}^n\|_0\big)+\nonumber\\
  && C\bigg\|\frac{[f(|\widehat{u}^n|^2)-f(|\widehat{u}^{n-1}|^2)]-
   [f(|\widehat{U}^n|^2)-f(|\widehat{U}^{n-1}|^2)]}{\tau}\bigg\|_0.
\end{eqnarray}
Similar as \cite{shi2017unconditional}, we have
\begin{eqnarray}\label{352}
% \nonumber to remove numbering (before each equation)
    &&\bigg\|\frac{[f(|\widehat{u}^n|^2)-f(|\widehat{u}^{n-1}|^2)]-
   [f(|\widehat{U}^n|^2)-f(|\widehat{U}^{n-1}|^2)]}{\tau}\bigg\|_0\nonumber\\
  &&\leq C\big(\|\delta_t\widehat{e}^n\|_0
   +\|\widehat{e}^{n-1}\|_0+\|\widehat{e}^n\|_0\big)+C\tau^2.
\end{eqnarray}
Then, it follows that
\begin{equation}\label{353}
  \|\delta_tP_1^n\|_0 \leq C\big(\|\delta_t\widehat{e}^n\|_0
   +\|\widehat{e}^{n-1}\|_0+\|\widehat{e}^n\|_0+\|\delta_t\widetilde{e}^n\|_0 +\|\widetilde{e}^n\|_0\big)+C\tau^2.
\end{equation}
Accordingly, from \eqref{350}-\eqref{353}, we obtain that
\begin{eqnarray}\label{354}
% \nonumber to remove numbering (before each equation)
  Re\bigg\{\frac{\kappa+i\zeta}{\nu+i\eta}(P_1^n, \delta_t \Delta e^n)\bigg\}&=&-Re\bigg\{\frac{\kappa+i\zeta}{\nu+i\eta}(\delta_tP_1^n, \Delta e^{n-1})\bigg\}
  +Re\bigg\{\frac{\kappa+i\zeta}{\nu+i\eta}\delta_t(P_1^n, \Delta e^{n-1})\bigg\}  \nonumber\\
  &\leq&   C\big(\|\delta_t\widehat{e}^n\|^2_0
   +\|\widehat{e}^{n-1}\|^2_0+\|\widehat{e}^n\|^2_0+\|\delta_t\widetilde{e}^n\|^2_0 +\|\widetilde{e}^n\|^2_0+\|\Delta e^{n-1}\|_0^2\big)\nonumber\\
  && + C\tau^4  +Re\bigg\{\frac{\kappa+i\zeta}{\nu+i\eta}\delta_t(P_1^n, \Delta e^{n-1})\bigg\}.
\end{eqnarray}
On the other hand, it is easy to show that
\begin{eqnarray}\label{355}
% \nonumber to remove numbering (before each equation)
  \|\delta_tS^n\|_0 &\leq& \bigg\|\frac{\big(\delta_tu^n-u_t^{n-\frac{1}{2}}\big)-\big(\delta_tu^{n-1}-u_t^{n-\frac{3}{2}}\big)}{\tau}\bigg\|_0
  \nonumber\\
  && + \sqrt{\nu^2+\eta^2}\bigg\|\frac{\big(\Delta\widetilde{u}^n-\Delta u^{n-\frac{1}{2}}\big)-\big(\Delta\widetilde{u}^{n-1}-\Delta u^{n-\frac{3}{2}}\big)}{\tau}\bigg\|_0   \nonumber\\
   &&  + \bigg\|\frac{[f(|\widehat{u}^n|^2)\widetilde{u}^n - f(|u^{n-\frac{1}{2}}|^2)u^{n-\frac{1}{2}}]-
   [f(|\widehat{u}^{n-1}|^2)\widetilde{u}^{n-1} - f(|u^{n-\frac{3}{2}}|^2)u^{n-\frac{3}{2}}]}{\tau}\bigg\|_0\nonumber\\
   &\leq&  C\tau^2.
\end{eqnarray}
Therefore, it follows from \eqref{350} and \eqref{355} that
\begin{eqnarray}
% \nonumber to remove numbering (before each equation)
   - Re\bigg\{\frac{1}{\nu+i\eta}(S^n, \delta_t \Delta e^n)\bigg\} &=& Re\bigg\{\frac{1}{\nu+i\eta}(\delta_tS^n, \Delta e^{n-1})\bigg\} - Re\bigg\{\frac{1}{\nu+i\eta}\delta_t (S^n, \Delta e^n)\bigg\} \nonumber\\
   & \leq& C\tau^4 + C\|\Delta e^{n-1}\|_0^2 - Re\bigg\{\frac{1}{\nu+i\eta}\delta_t (S^n, \Delta e^n)\bigg\}.\label{356}
\end{eqnarray}
Substituting \eqref{349}, \eqref{354} and \eqref{356} into \eqref{348}, and taking
\begin{equation*}
  \epsilon_4=\frac{\nu}{2|\gamma|\sqrt{\nu^2+\eta^2}},
\end{equation*}
we arrive at one by summing from $2$ to $n$,
\begin{eqnarray}\label{357}
% \nonumber to remove numbering (before each equation)
 \|\Delta e^n\|_0^2 &\leq& C\tau^4+C\tau\sum_{k=1}^n\bigg(\|\delta_te^k\|_0^2+\|\Delta e^k\|_0^2\bigg)+\|\Delta e^1\|_0^2
 +C\|\delta_t\widehat{e}^2\|_0^2+C\|\Delta \widehat{e}^1\|_0^2 +
  \nonumber\\
  &&  Re\bigg\{\frac{\kappa+i\zeta}{\nu+i\eta}\big[(P_1^n, \Delta e^n)-(P_1^1, \Delta e^1)\big]\bigg\}- Re\bigg\{\frac{1}{\nu+i\eta}\big[(S^n, \Delta e^n)-(S^1, \Delta e^1)\big]\bigg\}.\nonumber\\
\end{eqnarray}
Then, thanks to \eqref{335} and $e^n=\tau\sum_{i=1}^n\delta_te^i$,
we derive that
\begin{equation}\label{358}
\|P_1^n\|_0^2\leq C\big(\|e^{n}\|^2_0+\|e^{n-1}\|^2_0+\|e^{n-2}\|^2_0\big)\leq C\tau^2 \bigg\|\sum_{k=1}^n\delta_te^k\bigg\|^2_0
\leq C\tau\sum_{k=1}^n\|\delta_te^k\|_0^2.
\end{equation}
Therefore, it follows from \eqref{357} that
\begin{equation}\label{359}
  \|\Delta e^n\|_0^2\leq C\tau^4+ C\tau\sum_{k=1}^n\bigg(\|\delta_te^k\|_0^2+\|\Delta e^k\|_0^2\bigg).
\end{equation}
Hence, from \eqref{346} and \eqref{359}, and then applying the discrete Gronwall's inequality, one obtains that
\begin{equation}\label{360}
  \|\Delta e^n\|_0\leq C\tau^2,
\end{equation}
which implies that
\begin{equation}\label{361}
  \|e^n\|_2\leq C_8\tau^2.
\end{equation}
Finally, from \eqref{361}, there hold
\begin{equation}\label{362}
 \|\delta_{tt}U^n\|_2\leq \|\delta_{tt}e^n\|_2+\|\delta_{tt}u^n\|_2\leq C_9,
\end{equation}
and
\begin{eqnarray}\label{363}
% \nonumber to remove numbering (before each equation)
  \|\delta_tU^n\|_{0,\infty}+\|U^n\|_{0,\infty} & \leq& \|\delta_te^n\|_{0,\infty}+\|\delta_tu^n\|_{0,\infty}+\|e^n\|_{0,\infty}+\|u^n\|_{0,\infty} \nonumber \\
   &\leq & C\bigg(\bigg\|\frac{e^n-e^{n-1}}{\tau}\bigg\|_2+\|e^n\|_{2})\bigg)+\|\delta_tu^n\|_{0,\infty}+\|u^n\|_{0,\infty} \nonumber \\
   &\leq & CC_8(2\tau+\tau^2) + \|\delta_tu^n\|_{0,\infty}+\|u^n\|_{0,\infty} \nonumber \\
   & \leq & K_0,
\end{eqnarray}
where $\tau\leq \tau_5$ and $\tau_5$ satisfies $CC_8(2\tau_5+\tau_5^2)\leq 1$. Suppose that $C_0\leq \sum_{k=0}^9C_k$ and $\tau\leq \min_{0\leq k\leq 5}\{\tau_k\}$. Then, we have now finished the mathematical induction, and thus complete the proof of Theorem \ref{Theo1}. $\Box$

\end{proof}

\section{Superconvergence Results for the Fully Discrete System}

In this section, we intent to estimate the global superconvergence result for the fully discrete system. To this end,
we first show the unconditional boundedness of the fully discrete solution $U_h^n$ ($1\leq n\leq N$) in the sense of $L^\infty-$norm,
which can be deduced by the error result $\|R_hU^n-U_h^n\|_0=O(h^2)$. Then, the error $\|\nabla(R_hU^n-U_h^n)\|_0$ is bounded with the order
$O(h^2+\tau^2)$. According to the relationship between $I_h$ and $R_h$,
the superclose result $\|I_hu^n-U_h^n\|_0=O(h^2+\tau^2)$ is proved. Finally, the global superconvergence result is derived by virtue of
the interpolated postprocessing technique. For convenience, we split the error functions into following ones
\begin{equation}\label{41}
  U^n-U_h^n=(U^n-R_hU^n)+(R_hU^n-U_h^n):=\lambda^n+\theta^n, \quad 0\leq n\leq N.
\end{equation}

\begin{Theo}\label{Theo2}
Let $u$ and $U_h^n$ ($1\leq n\leq N$) be the solutions of \eqref{11}-\eqref{13} and \eqref{25}-\eqref{27}, respectively.
Then, under the conditions of Theorem \ref{Theo1}, we obtain that
\begin{equation}\label{42}
  \|\theta^n\|_0\leq C'h^2, \quad 0\leq n\leq N.
\end{equation}
Furthermore, if $h$ is small enough, we have
 \begin{equation}\label{43}
  \|U_h^n\|_{0, \infty}\leq K_1, \quad 0\leq n\leq N,
\end{equation}
where $C'>0$ is a constant independent of $h$, and
\begin{equation*}
  K_1:=1+\|R_hU^{1,0}\|_{0, \infty}+\max_{0\leq j\leq N}\|R_hU^{j}\|_{0, \infty}.
\end{equation*}
\end{Theo}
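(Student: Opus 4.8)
The plan is to derive an error equation for $\theta^n=R_hU^n-U_h^n$ by subtracting the fully discrete scheme \eqref{25} from the weak form of the time-discrete system \eqref{31}, using the defining property \eqref{21} of the Ritz projection to annihilate the gradient term coming from $\lambda^n$. Testing the resulting equation with $\widetilde{\theta}^n$ and taking real parts (so that the leading term becomes $(\|\theta^n\|_0^2-\|\theta^{n-1}\|_0^2)/(2\tau)$ and the Laplacian term contributes the nonnegative $\nu\|\nabla\widetilde{\theta}^n\|_0^2$), one is left to control the projection remainder $\delta_t\lambda^n$ and the nonlinear difference $(\kappa+i\zeta)\big(f(|\widehat{U}^n|^2)\widetilde{U}^n-f(|\widehat{U}_h^n|^2)\widetilde{U}_h^n,\widetilde{\theta}^n\big)$. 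The term $\|\delta_t\lambda^n\|_0$ is $O(h^2)$ by \eqref{23} applied to $\delta_tU^n$, whose $H^2$-regularity is exactly what Theorem \ref{Theo1} (the bound on $\delta_{tt}U^n$ together with the bound on $(U^{1,0}-U^0)/\tau$, hence on $\delta_tU^n$) provides. The induction structure mirrors the one in the proof of Theorem \ref{Theo1}: one assumes \eqref{43} holds for indices $\le n-1$ (which lets $f$, $f'$ be treated as bounded along the discrete solution), proves \eqref{42} at step $n$ via a discrete Gronwall argument, then upgrades to \eqref{43} at step $n$ through the inverse inequality $\|\theta^n\|_{0,\infty}\le Ch^{-1}\|\theta^n\|_0\le C'C h\to 0$ together with $\|U_h^n\|_{0,\infty}\le\|\theta^n\|_{0,\infty}+\|R_hU^n\|_{0,\infty}\le K_1$.

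The key steps, in order: (1) handle the first step $\theta^1$ (and the predictor $\theta^{1,0}$) separately, exploiting the special estimates \eqref{311}-\eqref{326} and the boundedness \eqref{313}, \eqref{328} already established, to get $\|\theta^1\|_0\le Ch^2$ and the starting $L^\infty$ bound; (2) set up the induction hypothesis $\|U_h^m\|_{0,\infty}\le K_1$ for $m\le n-1$; (3) write the error equation for $\theta^n$, test with $\widetilde{\theta}^n$, take the real part; (4) bound the nonlinear term: write $f(|\widehat{U}^n|^2)\widetilde{U}^n-f(|\widehat{U}_h^n|^2)\widetilde{U}_h^n$ as $\big[f(|\widehat{U}^n|^2)-f(|\widehat{U}_h^n|^2)\big]\widetilde{U}^n+f(|\widehat{U}_h^n|^2)\widetilde{\theta}^n+f(|\widehat{U}_h^n|^2)\widetilde{\lambda}^n$, use the mean-value theorem and the uniform bounds on $\widehat{U}_h^m$ (from the induction hypothesis) and on $\widehat{U}^n$, $\widetilde{U}^n$ (from Theorem \ref{Theo1}) to get a bound $C\big(\|\theta^n\|_0+\|\theta^{n-1}\|_0+\|\theta^{n-2}\|_0+h^2\big)$; (5) absorb, sum from the starting index to $n$, apply the discrete Gronwall inequality to conclude $\|\theta^n\|_0\le C'h^2$; (6) apply the inverse estimate to close the $L^\infty$ induction, choosing $h$ small enough that $C'Ch\le 1$.

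The main obstacle is step (4) combined with the induction bookkeeping: controlling the cubic-type nonlinearity requires knowing $\|\widehat{U}_h^m\|_{0,\infty}$ is bounded at the previous steps, which is precisely \eqref{43} being proved — so the $L^2$ estimate \eqref{42} and the $L^\infty$ estimate \eqref{43} must be established together in a single induction, with the inverse inequality $\|\theta^n\|_{0,\infty}\le Ch^{-1}\|\theta^n\|_0$ providing the bridge and forcing the "$h$ small enough" hypothesis. A secondary technical point is that the extrapolation $\widehat{U}_h^n=\tfrac32U_h^{n-1}-\tfrac12U_h^{n-2}$ only involves earlier time levels, so its $L^\infty$ bound is available from the induction hypothesis without circularity; one must just be careful at $n=1,2$ where the convention $\widehat{U}^1=\widetilde{U}^1$ etc.\ and the predictor-corrector start are in force, handled by step (1). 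The rest is the routine Cauchy--Schwarz/Young/Gronwall machinery already exhibited in the proof of Theorem \ref{Theo1}.
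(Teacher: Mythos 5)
Your proposal is correct and follows essentially the same route as the paper: the error equation for $\theta^n$ obtained by subtracting \eqref{25} from \eqref{31} with the Ritz projection \eqref{21} removing the gradient part of $\lambda^n$, testing with $\widetilde{\theta}^n$ and taking real parts, bounding $\delta_t\lambda^n$ through the $H^2$-regularity of $\delta_tU^n$ from Theorem \ref{Theo1}, splitting the nonlinear difference exactly as in \eqref{427}, and running a joint $L^2$/$L^\infty$ induction closed by the inverse inequality with $h$ small, after treating the predictor--corrector start ($\theta^{1,0}$, $\theta^1$) separately. This matches the paper's argument step for step, so no further comparison is needed.
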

\begin{proof}
By the definitions of $U_h^0$ and $K_1$, it obviously holds that
\begin{equation}\label{44}
  \|U_h^{0}\|_{0, \infty}=\|R_hU^{0}\|_{0, \infty}\leq K_1.
\end{equation}
From \eqref{27} and \eqref{32}, we derive the error equation
\begin{eqnarray}\label{45}
% \nonumber to remove numbering (before each equation)
    && \bigg(\frac{\theta^{1,0}}{\tau}, v_h\bigg)+\frac{\nu+i\eta}{2}(\nabla\theta^{1,0}, \nabla v_h)
    +(\kappa+i\zeta)\bigg(f(|U^0|^2)\frac{U^{1,0}+U^0}{2}-f(|U_h^0|^2)\frac{U_h^{1,0}+U_h^0}{2}, v_h\bigg) \nonumber \\
    && - \frac{\gamma}{2}(\theta^{1,0}, v_h)= -\bigg(\frac{\lambda^{1,0}-\lambda^0}{\tau}, v_h\bigg)
    -\frac{\nu+i\eta}{2}\big(\nabla(\lambda^{1,0}+\lambda^0), \nabla v_h\big)+\frac{\gamma}{2}(\lambda^{1,0}+\lambda^0, v_h). \nonumber \\
    &&
\end{eqnarray}
Substituting $v_h={\theta^{1,0}}/{\tau}$ in \eqref{45}, and taking the real part of the resulting equation, one obtains that
\begin{eqnarray}\label{46}
% \nonumber to remove numbering (before each equation)
    && \bigg\|\frac{\theta^{1,0}}{\tau}\bigg\|_0^2+\frac{\nu}{2\tau}\|\nabla\theta^{1,0}\|_0^2- \frac{\gamma\tau}{2}\bigg\|\frac{\theta^{1,0}}{\tau}\bigg\|_0^2
    \nonumber \\
    && =-Re\bigg\{(\kappa+i\zeta)\bigg(f(|U^0|^2)\frac{U^{1,0}+U^0}{2}-f(|U_h^0|^2)\frac{U_h^{1,0}+U_h^0}{2}, \frac{\theta^{1,0}}{\tau}\bigg) \bigg\}-Re\bigg(\frac{\lambda^{1,0}-\lambda^0}{\tau}, \frac{\theta^{1,0}}{\tau}\bigg)
     \nonumber \\
    &&~~~~+\frac{\gamma}{2}Re\bigg(\lambda^{1,0} +\lambda^0, \frac{\theta^{1,0}}{\tau}\bigg):=\sum_{j=1}^3\mathcal{L}_j.
\end{eqnarray}
It follows from \eqref{23} and \eqref{313} that
\begin{eqnarray}\label{47}
% \nonumber to remove numbering (before each equation)
  |\mathcal{L}_1| &=& \bigg|Re\bigg\{(\kappa+i\zeta)\bigg(f(|U^0|^2)\frac{U^{1,0}+U^0}{2}-f(|U_h^0|^2)\frac{U_h^{1,0}+U_h^0}{2}, \frac{\theta^{1,0}}{\tau}\bigg) \bigg\}\bigg| \nonumber\\
  &=& \bigg|Re\bigg\{(\kappa+i\zeta)\bigg((f(|U^0|^2)-f(|U_h^0|^2))\frac{U^{1,0}+U^0}{2}+f(|U_h^0|^2)\frac{\theta^{1,0}+\lambda^{1,0}+\lambda^0}{2}, \frac{\theta^{1,0}}{\tau}\bigg) \bigg\}\bigg|   \nonumber\\
   &\leq & Ch^4+C\|\theta^{1,0}\|_0^2+\epsilon_5\bigg\|\frac{\theta^{1,0}}{\tau}\bigg\|_0^2.
\end{eqnarray}
Similarly, from \eqref{23} and \eqref{35}, we obtain
\begin{equation}\label{48}
   |\mathcal{L}_2| =\bigg|Re\bigg(\frac{\lambda^{1,0}-\lambda^0}{\tau}, \frac{\theta^{1,0}}{\tau}\bigg)\bigg|
  \leq Ch^2\bigg\|\frac{U^{1,0}-U^0}{\tau}\bigg\|_2\bigg\| \frac{\theta^{1,0}}{\tau}\bigg\|_0\leq Ch^4
  +\epsilon_6\bigg\|\frac{\theta^{1,0}}{\tau}\bigg\|_0^2,
\end{equation}
and
\begin{equation}\label{49}
   |\mathcal{L}_3| =\bigg|\frac{\gamma}{2}Re\bigg(\lambda^{1,0} +\lambda^0, \frac{\theta^{1,0}}{\tau}\bigg)\bigg|
  \leq Ch^2\bigg\|U^{1,0}+U^0\bigg\|_2\bigg\| \frac{\theta^{1,0}}{\tau}\bigg\|_0\leq Ch^4
  +\epsilon_7\bigg\|\frac{\theta^{1,0}}{\tau}\bigg\|_0^2.
\end{equation}
Substituting \eqref{47}-\eqref{49} into \eqref{46} yields
\begin{equation}\label{410}
   \bigg\|\frac{\theta^{1,0}}{\tau}\bigg\|_0^2+\frac{\nu}{2\tau}\|\nabla\theta^{1,0}\|_0^2- \frac{\gamma\tau}{2}\bigg\|\frac{\theta^{1,0}}{\tau}\bigg\|_0^2
   \leq Ch^4+C\|\theta^{1,0}\|_0^2+(\epsilon_5+\epsilon_6+\epsilon_7)\bigg\|\frac{\theta^{1,0}}{\tau}\bigg\|_0^2.
\end{equation}
Taking $\epsilon_5+\epsilon_6+\epsilon_7=1/2$ in \eqref{410}, then we easily conclude that
\begin{equation}\label{411}
 \|\theta^{1,0}\|_0+\sqrt{\tau}\|\nabla \theta^{1,0}\|_0\leq C'_0\tau h^2,
\end{equation}
which implies
\begin{eqnarray}\label{412}
% \nonumber to remove numbering (before each equation)
  \|U_h^{1,0}\|_{0,\infty} &\leq& \|R_hU^{1,0}-U_h^{1,0}\|_{0, \infty}+\|R_hU^{1,0}\|_{0, \infty} \nonumber \\
  &\leq& Ch^{-1}\|\theta^{1,0}\|_0 +\|R_hU^{1,0}\|_{0, \infty}\nonumber \\
  &\leq& CC'_0\tau h +\|R_hU^{1,0}\|_{0, \infty}\nonumber \\
   &\leq &K_1,
\end{eqnarray}
where $\tau<\tau_6$ and $h\leq h_0'$, such that $\tau h\leq 1/CC'_0$. For $n=1$, from \eqref{26} and \eqref{33}, one arrives at
\begin{eqnarray}\label{413}
% \nonumber to remove numbering (before each equation)
 &&(\theta^1, v_h)+\frac{(\nu+i\eta)\tau}{2}(\nabla \theta^1, \nabla v_h)-\frac{\gamma\tau}{2}(\theta^1, v_h)\nonumber\\
   && = -(\lambda^1-\lambda^0, v_h)-\frac{(\nu+i\eta)\tau}{2}(\nabla \lambda^1+\nabla \lambda^0, \nabla v_h)
   -(\kappa+i\zeta)\tau(\mathcal{H}^1, v_h) \nonumber\\
  &&~~~~ +\frac{\gamma\tau}{2}(\lambda^1+\lambda^0, v_h),
\end{eqnarray}
where
\begin{equation*}
  \mathcal{H}^1:=f\bigg(\bigg|\frac{U^{1,0}+U^0}{2}\bigg|^2\bigg)\frac{U^{1,0}+U^0}{2}-f\bigg(\bigg|\frac{U_h^{1,0}+U_h^0}{2}\bigg|^2\bigg)\frac{U_h^{1,0}+U_h^0}{2}.
\end{equation*}
Setting $v_h=\theta^1$ in \eqref{413}, and taking the real part of the
resulting equation, we obtain
\begin{eqnarray}\label{414}
% \nonumber to remove numbering (before each equation)
 &&\|\theta^1\|_0^2+\frac{\nu\tau}{2}\|\nabla \theta^1\|_0^2-\frac{\gamma\tau}{2}\|\theta^1\|_0^2\nonumber\\
   && = -Re(\lambda^1-\lambda^0, \theta^1)
   -Re\bigg\{(\kappa+i\zeta)\tau(\mathcal{H}^1, \theta^1)\bigg\}+\frac{\gamma\tau}{2}Re(\lambda^1+\lambda^0, \theta^1):=\sum_{j=1}^3\mathcal{K}_j,\nonumber\\
\end{eqnarray}
where
\begin{equation*}
  \mathcal{H}^1:=f\bigg(\bigg|\frac{U^{1,0}+U^0}{2}\bigg|^2\bigg)\frac{U^{1}+U^0}{2}-f\bigg(\bigg|\frac{U_h^{1,0}+U_h^0}{2}\bigg|^2\bigg)\frac{U_h^{1}+U_h^0}{2}.
\end{equation*}
As the result of \eqref{35}, it shows that
\begin{equation}\label{415}
 |\mathcal{K}_1|=\bigg|\tau Re\bigg(\frac{\lambda^1-\lambda^0}{\tau}, \theta^1\bigg)\bigg|
 \leq C\tau h^2\bigg\|\frac{U^1-U^0}{\tau}\bigg\|_2\|\theta^1\|_0\leq C\tau h^4+C\tau\|\theta^1\|_0^2.
\end{equation}
It is not difficult to derive that
\begin{equation}\label{416}
   |\mathcal{K}_2|=\bigg|Re\bigg\{(\kappa+i\zeta)\tau(\mathcal{H}^1, \theta^1)\bigg\}\bigg|
   \leq C\tau h^4+C\tau\|\theta^1\|_0^2+C\tau\|\theta^{1,0}\|_0^2,
\end{equation}
and
\begin{equation}\label{417}
  |\mathcal{K}_3|\leq C\tau h^4+C\tau\|\theta^1\|_0^2.
\end{equation}
Therefore, combining \eqref{415}-\eqref{417} with \eqref{414}, yields
\begin{equation}\label{418}
  \|\theta^1\|_0^2+\tau\|\nabla \theta^1\|_0^2\leq C\tau\|\theta^1\|_0^2+C\tau h^4.
\end{equation}
Hence, if $\tau$ is sufficiently small, we derive that
\begin{equation}\label{419}
  \|\theta^1\|_0+\sqrt{\tau}\|\nabla \theta^1\|_0\leq C'\sqrt{\tau}h^2.
\end{equation}
By the inverse inequality, we obtain
\begin{equation}\label{420}
  \|U_h^1\|_{0, \infty}\leq Ch^{-1}\|\theta^1\|_0+\|R_hU^1\|_{0, \infty}\leq K_1,
\end{equation}
where $\tau\leq \tau_7$ and $h\leq h_1$, such that $\sqrt{\tau}h\leq \sqrt{\tau_7}h_1\leq1/CC'$. In what follows, let us assume that \eqref{42}
holds for $m\leq n-1$, and one intends to prove its correctness
for $m=n$. By virtue of the assumption, we obtain
\begin{equation}\label{421}
  \|U_h^m\|_{0, \infty}\leq Ch^{-1}\|\theta^m\|_0+\|R_hU^m\|_{0, \infty}\leq K_1,\quad 0\leq m\leq n-1,
\end{equation}
where $h\leq h_2:=1/CC'$. It follows from \eqref{25} and \eqref{31} that
\begin{equation}\label{422}
  (\delta_t \theta^n, v_h)+(\nu+i\eta)(\nabla \widetilde{\theta}^n, \nabla v_h)-\gamma(\widetilde{\theta}^n, v_h)=-(\delta_t\lambda^n, v_h)
  -(\kappa+i\zeta)(\mathcal{H}^n, v_h)+\gamma(\widetilde{\lambda}^n, v_h),
\end{equation}
where
\begin{equation*}
 \mathcal{H}^n:=f(|\widehat{U}^n|^2)\widetilde{U}^n-f(|\widehat{U}^n|_h^2)\widetilde{U}_h^n.
\end{equation*}
Substitute $v_h=\widetilde{\theta}^n$ in \eqref{422}, and take the real part of the resulting equation to arrive at
\begin{eqnarray}\label{423}
% \nonumber to remove numbering (before each equation)
  &&\frac{\|\theta^n\|_0^2-\|\theta^{n-1}\|_0^2}{2\tau}+\nu \|\nabla \widetilde{\theta}^n\|_0^2-\gamma\|\widetilde{\theta}^n\|_0^2  \nonumber\\
  &&= -Re(\delta_t\lambda^n, \widetilde{\theta}^n) -Re\big\{(\kappa+i\zeta)(\mathcal{H}^n, \widetilde{\theta}^n)\big\}
  +\gamma Re(\widetilde{\lambda}^n, \widetilde{\theta}^n):= \sum_{j=1}^3\mathcal{M}_j.
\end{eqnarray}
It is obvious from \eqref{23} that
\begin{equation}\label{424}
  |\mathcal{M}_1|\leq \|\delta_t\lambda^n\|_0\|\widetilde{\theta}^n\|_0
  \leq Ch^2\|\delta_tU^n\|_2\|\widetilde{\theta}^n\|_0\leq Ch^4\|\delta_tU^n\|_2^2+\frac{1}{4}\|\widetilde{\theta}^n\|_0.
\end{equation}
Thanks to \eqref{361}, we have
\begin{equation}\label{425}
  \|\delta_tU^n\|_2\leq \|\delta_tu^n\|_2+\bigg\|\frac{e^n-e^{n-1}}{\tau}\bigg\|_2
  \leq \|\delta_tu^n\|_2+2C_8\tau\leq K_2,
\end{equation}
where $\tau\leq \tau_8:=1/2C_8$. Therefore, from \eqref{424} and \eqref{425}, we obtain
\begin{equation}\label{426}
  |\mathcal{M}_1|\leq Ch^4+C(\|\theta^n\|_0^2+\|\theta^{n-1}\|_0^2).
\end{equation}
Utilize \eqref{23}, \eqref{35} and \eqref{421} to arrive at
\begin{eqnarray}\label{427}
% \nonumber to remove numbering (before each equation)
  |\mathcal{M}_2| &=& \bigg|(\kappa+i\zeta)\bigg(f(|\widehat{U}^n|^2)\widetilde{U}^n-f(|\widehat{U}^n_h|^2)\widetilde{U}_h^n, \widetilde{\theta}^n\bigg)\bigg| \nonumber\\
    &\leq& \sqrt{\kappa^2+\zeta^2} \bigg|\bigg((f(|\widehat{U}^n|^2)-f(|\widehat{U}_h^n|^2))\widetilde{U}^n+f(|\widehat{U}^n_h|^2)(\widetilde{U}^n-\widetilde{U}_h^n), \widetilde{\theta}^n\bigg)\bigg| \nonumber\\
    &\leq& C(\|\theta^n\|_0^2+\|\theta^{n-1}\|_0^2+\|\theta^{n-2}\|_0^2)+Ch^4.
\end{eqnarray}
It follows from \eqref{23} that
\begin{equation}\label{428}
  |\mathcal{M}_3| = \big|\gamma Re(\widetilde{\lambda}^n, \widetilde{\theta}^n)\big|\leq
  Ch^4+C(\|\theta^n\|_0^2+\|\theta^{n-1}\|_0^2).
\end{equation}
Substituting \eqref{426}-\eqref{428} into \eqref{423} obtains that
\begin{equation}\label{429}
  \frac{\|\theta^n\|_0^2-\|\theta^{n-1}\|_0^2}{2\tau}+\nu \|\nabla \widetilde{\theta}^n\|_0^2\leq
  C(\|\theta^n\|_0^2+\|\theta^{n-1}\|_0^2+\|\theta^{n-2}\|_0^2)+Ch^4.
\end{equation}
Omitting the positive item $\nu \|\nabla \widetilde{\theta}^n\|_0^2$, and using the discrete Gronwall inequality, one concludes
\begin{equation}\label{430}
  \|\theta^n\|_0\leq C'h^2,
\end{equation}
which also implies
\begin{equation}\label{431}
  \|U_h^n\|_{0, \infty}\leq Ch^{-1}\|\theta^n\|_0+\|R_hU^n\|_{0, \infty}\leq K_1,
\end{equation}
where $h<h_1=1/CC'$.
By the mathematical induction, we have completed the proof of Theorem \ref{Theo2}. $\Box$

\end{proof}

\begin{Theo}\label{Theo3}
Under the conditions of Theorem \ref{Theo2}, the error estimate
\begin{equation}\label{432}
  \|\nabla (I_hu^n-U_h^n)\|_0\leq C(\tau^2+h^2)
\end{equation}
holds for $1\leq n\leq N$.
\end{Theo}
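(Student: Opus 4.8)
The plan is to reduce the $H^1$ estimate for $I_hu^n-U_h^n$ to a superclose bound for $\theta^n=R_hU^n-U_h^n$, which is the only genuinely new ingredient. First I would write
\begin{equation*}
  I_hu^n-U_h^n=(I_hu^n-R_hu^n)+R_h(u^n-U^n)+\theta^n=(I_hu^n-R_hu^n)+R_he^n+\theta^n.
\end{equation*}
By \eqref{24} the first term obeys $\|I_hu^n-R_hu^n\|_1\le Ch^2\|u^n\|_3=O(h^2)$, and by \eqref{22} together with \eqref{34} of Theorem \ref{Theo1} one has $\|\nabla R_he^n\|_0\le C\|\nabla e^n\|_0\le C\|e^n\|_2\le CC_0\tau^2$. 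Thus the whole statement follows once the superclose estimate $\|\nabla\theta^n\|_0\le C(h^2+\tau^2)$ is established (in fact $O(h^2)$ already suffices, the $\tau^2$ in \eqref{432} entering only through $R_he^n$).

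For the superclose estimate I would argue from the fully discrete error equation \eqref{422}, mimicking the $H^1$-argument used for the time-discrete system in \eqref{338}--\eqref{346}. Taking $v_h=\delta_t\theta^n$ in \eqref{422}, dividing by $\nu+i\eta$ and taking the real part gives
\begin{equation*}
  \frac{\nu}{\nu^2+\eta^2}\|\delta_t\theta^n\|_0^2+\frac{\|\nabla\theta^n\|_0^2-\|\nabla\theta^{n-1}\|_0^2}{2\tau}
  =Re\Big\{-\tfrac{1}{\nu+i\eta}(\delta_t\lambda^n,\delta_t\theta^n)-\tfrac{\kappa+i\zeta}{\nu+i\eta}(\mathcal{H}^n,\delta_t\theta^n)+\tfrac{\gamma}{\nu+i\eta}(\widetilde{\lambda}^n+\widetilde{\theta}^n,\delta_t\theta^n)\Big\},
\end{equation*}
where the $\widetilde{\theta}^n$ contribution comes from moving the $\gamma$ term to the right. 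Every factor paired with $\delta_t\theta^n$ is $O(h^2)$ in $L^2$: $\|\widetilde{\lambda}^n\|_0\le Ch^2$ and $\|\delta_t\lambda^n\|_0=\|(I-R_h)\delta_tU^n\|_0\le Ch^2\|\delta_tU^n\|_2\le Ch^2$ by \eqref{23} and \eqref{361} (equivalently \eqref{425}); $\|\widetilde{\theta}^n\|_0\le C'h^2$ by Theorem \ref{Theo2}; and $\|\mathcal{H}^n\|_0\le C(\|\theta^n\|_0+\|\theta^{n-1}\|_0+\|\theta^{n-2}\|_0)+Ch^2\le Ch^2$, again by Theorem \ref{Theo2} and \eqref{23}. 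Absorbing the $\|\delta_t\theta^n\|_0$ factors with Young's inequality leaves $\|\nabla\theta^n\|_0^2-\|\nabla\theta^{n-1}\|_0^2\le C\tau h^4$; summing from $2$ to $n$ and using the starting value $\|\nabla\theta^1\|_0\le C'h^2$ from \eqref{419} yields $\|\nabla\theta^n\|_0\le Ch^2$. Since the case $n=1$ is already contained in \eqref{419}, this proves $\|\nabla\theta^n\|_0\le C(h^2+\tau^2)$ for all $1\le n\le N$, and the three bounds combine to give \eqref{432}.

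The one delicate point is the treatment of the nonlinear term $\mathcal{H}^n=f(|\widehat{U}^n|^2)\widetilde{U}^n-f(|\widehat{U}_h^n|^2)\widetilde{U}_h^n$. I would write $\mathcal{H}^n=\big(f(|\widehat{U}^n|^2)-f(|\widehat{U}_h^n|^2)\big)\widetilde{U}^n+f(|\widehat{U}_h^n|^2)(\widetilde{U}^n-\widetilde{U}_h^n)$ and use that $f(|\cdot|^2)$ is locally Lipschitz, invoking the \emph{unconditional} $L^\infty$ bounds $\|U^m\|_{0,\infty}\le K_0$ and $\|U_h^m\|_{0,\infty}\le K_1$ from Theorems \ref{Theo1} and \ref{Theo2}; then $\widehat{U}^n-\widehat{U}_h^n=\tfrac{3}{2}(\lambda^{n-1}+\theta^{n-1})-\tfrac{1}{2}(\lambda^{n-2}+\theta^{n-2})$ and $\widetilde{U}^n-\widetilde{U}_h^n=\widetilde{\lambda}^n+\widetilde{\theta}^n$ yield the claimed bound through \eqref{23} and \eqref{42}. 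Because Theorems \ref{Theo1} and \ref{Theo2} already furnish these bounds with no restriction linking $\tau$ and $h$, no further induction or mesh condition is needed here --- this is exactly where the ``unconditional'' character of the analysis is used.
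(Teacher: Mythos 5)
Your proposal is correct and follows essentially the same route as the paper: the same three-term decomposition $I_hu^n-U_h^n=(I_hu^n-R_hu^n)+(R_hu^n-R_hU^n)+\theta^n$ handled by \eqref{24}, by \eqref{22} together with Theorem \ref{Theo1}, and by a superclose gradient bound for $\theta^n$ obtained by testing \eqref{422} with $v_h=\delta_t\theta^n$, dividing by $\nu+i\eta$, taking real parts, and absorbing the $\|\delta_t\theta^n\|_0$ factors with Young's inequality. The only (inessential) difference is that you sum directly using the unconditional $L^2$ bounds on $\theta^m$ from Theorem \ref{Theo2}, while the paper phrases the same estimate as an induction on $\|\nabla\theta^m\|_0$ and closes it with the discrete Gronwall inequality.
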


\begin{proof}
We first prove $\|\nabla \theta^n\|_0\leq \widetilde{C}h^2$ holds for $1\leq n\leq N$.
For the case of $n=1$, from \eqref{418}, the result is obviously right.
We now assume  $\|\nabla \theta^m\|_0\leq \widetilde{C}h^2$ holds for $m\leq n-1$, and intend to prove
$\|\nabla \theta^n\|_0\leq \widetilde{C}h^2$.

To this end, the error equation \eqref{422} is changed into the following one:
\begin{eqnarray}\label{433}
% \nonumber to remove numbering (before each equation)
  \frac{1}{\nu+i\eta}(\delta_t \theta^n, v_h)+(\nabla \widetilde{\theta}^n, \nabla v_h)&=&\frac{\gamma}{\nu+i\eta}(\widetilde{\theta}^n, v_h)-\frac{1}{\nu+i\eta}(\delta_t\lambda^n, v_h)
  -\frac{\kappa+i\zeta}{\nu+i\eta}(\mathcal{H}^n, v_h)\nonumber\\
  &&+\frac{\gamma}{\nu+i\eta}(\widetilde{\lambda}^n, v_h),
\end{eqnarray}
Let us denote $v_h=\delta_t\theta^n$ in \eqref{433}, and take its real part to obtain
\begin{eqnarray}\label{434}
% \nonumber to remove numbering (before each equation)
  &&\frac{\nu}{\nu^2+\eta^2}\|\delta_t \theta^n\|_0^2+\frac{\|\nabla \theta^n\|_0^2-\|\nabla\theta^{n-1}\|_0^2}{2\tau}\nonumber\\
  &&=Re\bigg\{\frac{\gamma}{\nu+i\eta}(\widetilde{\theta}^n, \delta_t\theta^n)\bigg\}-Re\bigg\{\frac{1}{\nu+i\eta}(\delta_t\lambda^n, \delta_t\theta^n)\bigg\}
  -Re\bigg\{\frac{\kappa+i\zeta}{\nu+i\eta}(\mathcal{H}^n, \delta_t\theta^n)\bigg\}\nonumber\\
  &&~~~+Re\bigg\{\frac{\gamma}{\nu+i\eta}(\widetilde{\lambda}^n, \delta_t\theta^n)\bigg\}:=\sum_{j=1}^4{\mathcal{N}_j}.
\end{eqnarray}
It is obvious that
%\begin{equation}\label{435}
%
%\end{equation}
\begin{eqnarray*}
% \nonumber to remove numbering (before each equation)
   &&  |\mathcal{N}_1|\leq \frac{|\gamma|}{\sqrt{\nu^2+\eta^2}}\bigg(\epsilon_8\|\delta_t\theta^n\|_0^2+\frac{1}{4\epsilon_8}\|\widetilde{\theta}^n\|_0^2\bigg),  \\
  &&|\mathcal{N}_2|\leq  \frac{1}{\sqrt{\nu^2+\eta^2}}\bigg(\epsilon_9\|\delta_t\theta^n\|_0^2+\frac{1}{4\epsilon_9}\|\delta_t\lambda^n\|_0^2\bigg)
  \leq \frac{\epsilon_9}{\sqrt{\nu^2+\eta^2}}\|\delta_t\theta^n\|_0^2+Ch^4,  \\
  && |\mathcal{N}_3|\leq  \frac{\sqrt{\kappa^2+\zeta^2}}{\sqrt{\nu^2+\eta^2}}\epsilon_{10}\|\delta_t\theta^n\|_0^2+C\big(
  \|\theta^n\|_0^2+\|\theta^{n-1}\|_0^2+\|\theta^{n-2}\|_0^2\big)+Ch^4, \\
  &&  |\mathcal{N}_4|\leq  \frac{|\gamma|}{\sqrt{\nu^2+\eta^2}}\bigg(\epsilon_{11}\|\delta_t\theta^n\|_0^2+\frac{1}{4\epsilon_{11}}\|\widetilde{\lambda}^n\|_0^2\bigg)
  \leq \frac{|\gamma|\epsilon_{11}}{\sqrt{\nu^2+\eta^2}}\|\delta_t\theta^n\|_0^2+Ch^4.
\end{eqnarray*}
Then, substituting above inequalities into \eqref{434}, and taking
\begin{equation*}
  \epsilon_8=\epsilon_{11}=\frac{\sqrt{\nu^2+\eta^2}}{8|\gamma|}, \quad \epsilon_9=\frac{\sqrt{\nu^2+\eta^2}}{8},\quad \epsilon_{10}=\frac{\sqrt{\nu^2+\eta^2}}{8\sqrt{\kappa^2+\zeta^2}},
\end{equation*}
we arrive at
\begin{equation}\label{435}
  \frac{\|\nabla \theta^n\|_0^2-\|\nabla\theta^{n-1}\|_0^2}{2\tau}\leq C\big(\|\theta^n\|_0^2+\|\theta^{n-1}\|_0^2+\|\theta^{n-2}\|_0^2\big)+Ch^4.
\end{equation}
Then, by using the discrete Gronwall inequality in \eqref{435}, we have
\begin{equation}\label{436}
  \|\nabla \theta^n\|_0\leq \widetilde{C}h^2.
\end{equation}
Finally, with the help of \eqref{22}-\eqref{24}, we conclude
\begin{eqnarray}
% \nonumber to remove numbering (before each equation)
  \|\nabla (I_hu^n-U_h^n)\|_0 &\leq& \|\nabla (I_hu^n-R_hu^n)\|_0 + \|\nabla (R_hu^n-R_hU^n)\|_0+\|\nabla (R_hU^n-U_h^n)\|_0 \nonumber\\
   &\leq& Ch^2\|u^n\|_3+C\|e^n\|_2 +C\|\nabla \theta^n\|_0 \nonumber\\
   &\leq& C(\tau^2+h^2).
\end{eqnarray}

\end{proof}

Based on Theorem \ref{Theo3} and the inerpolated postprocessing operator $I_{2h}^2$ \cite{Shi2016Unconditional},
the following global superconvergence result is deduced.
\begin{Theo}
Under the conditions of Theorem \ref{Theo2}, we have
\begin{equation}\label{438}
  \|u^n-I_{2h}^2U_h^n\|_1=O(\tau^2+h^2).
\end{equation}
\end{Theo}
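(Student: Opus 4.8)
The plan is to reduce \eqref{438} to the superclose estimate of Theorem~\ref{Theo3} by exploiting the structural properties of the interpolated postprocessing operator $I_{2h}^2$ on the macro-mesh $\mathcal{T}_{2h}$ (see \cite{Shi2016Unconditional}): the consistency relation $I_{2h}^2 I_h w = I_{2h}^2 w$ for all sufficiently smooth $w\in H_0^1(\Omega)$, the approximation property $\|w-I_{2h}^2 I_h w\|_1\le Ch^2\|w\|_3$, and the $H^1$-stability $\|I_{2h}^2 v_h\|_1\le C\|v_h\|_1$ for every $v_h\in\mathcal{V}_{h0}$. First I would split, via the triangle inequality, $\|u^n-I_{2h}^2 U_h^n\|_1\le \|u^n-I_{2h}^2 I_h u^n\|_1+\|I_{2h}^2(I_h u^n-U_h^n)\|_1$. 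The first term is bounded by $Ch^2\|u^n\|_3$ by the accuracy of the postprocessing operator, while stability gives $\|I_{2h}^2(I_h u^n-U_h^n)\|_1\le C\|I_h u^n-U_h^n\|_1$.

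Next I would upgrade Theorem~\ref{Theo3}, which controls only the seminorm $\|\nabla(I_h u^n-U_h^n)\|_0$, to the full $H^1$-norm by estimating the $L^2$-part of $I_h u^n-U_h^n$. Writing $I_h u^n-U_h^n=(I_h u^n-R_h u^n)+R_h(u^n-U^n)+(R_h U^n-U_h^n)$, the three pieces are handled respectively by \eqref{24}, which gives $\|I_h u^n-R_h u^n\|_0\le Ch^2\|u^n\|_3$; by $\|R_h e^n\|_0\le \|e^n\|_0+\|(I-R_h)e^n\|_0\le C\|e^n\|_0+Ch^2\|e^n\|_2\le C\tau^2$ using \eqref{34}; and by \eqref{42}, which gives $\|\theta^n\|_0\le C'h^2$. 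Hence $\|I_h u^n-U_h^n\|_0\le C(\tau^2+h^2)$, and combined with Theorem~\ref{Theo3} this yields $\|I_h u^n-U_h^n\|_1\le C(\tau^2+h^2)$. Putting the two parts together, $\|u^n-I_{2h}^2 U_h^n\|_1\le Ch^2\|u^n\|_3+C(\tau^2+h^2)=O(\tau^2+h^2)$, as claimed.

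The routine part is exactly the algebra just described; the only substantive points are that the postprocessing operator does satisfy the stated consistency, stability, and $O(h^2)$ accuracy estimates on the bilinear element space over the rectangular partition, which are quoted from \cite{Shi2016Unconditional}, and that the regularity hypotheses of Theorem~\ref{Theo1} (in particular $u\in L^2(0,T;H^3(\Omega))$, together with $u_t\in L^\infty(0,T;H^2(\Omega))$) make $\|u^n\|_3$ a meaningful, uniformly bounded quantity after the standard embedding-in-time argument. I expect the main obstacle to be purely bookkeeping: keeping all constants independent of $h$ and $\tau$, and making sure that Theorem~\ref{Theo3} is applied at the correct index $n$ so that the estimate holds for all $1\le n\le N$ without any coupling between $\tau$ and $h$.
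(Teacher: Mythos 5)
Your argument is correct and is precisely the standard deduction the paper intends (the paper states the result without writing out a proof, citing only Theorem~\ref{Theo3} and the properties of $I_{2h}^2$ from \cite{Shi2016Unconditional}): split $u^n-I_{2h}^2U_h^n$ using $I_{2h}^2I_h=I_{2h}^2$, the $O(h^2)$ accuracy of the postprocessing, its $H^1$-boundedness on $\mathcal{V}_{h0}$, and the superclose bound. The only simplification available is that $I_hu^n-U_h^n\in\mathcal{V}_{h0}\subset H_0^1(\Omega)$, so the Poincar\'e--Friedrichs inequality upgrades the seminorm bound of Theorem~\ref{Theo3} to the full $H^1$-norm directly, making your separate $L^2$ decomposition (which is nevertheless valid) unnecessary.
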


\begin{Rem}
In this paper, the time-space error splitting technique is adopted to obtain the unconditional superconvergence results of a linearized Crank-Nicolson
Galerkin FEM for generalized Ginzburg-Landau equation. Due to the difference between the Ginzburg-Landau equations and the Schr{\"o}dinger equations,
the analysis procedure in this paper is different from one in \cite{shi2017unconditional}.
\end{Rem}

\section{Numerical Results}

\begin{example}\label{exa1}
Consider the NGLE
\begin{align}
& u_t - (1+i)\Delta u + (1+i)|u|^2u - u = g(X, t),\quad (X, t)\in \Omega\times (0, T],\label{51}\\
  &u(X, 0)=u_0(X),\quad X\in \Omega, \label{52}\\
  &u(X,t)=0,\quad (X, t)\in \partial \Omega\times (0, T], \label{53}
\end{align}
where $\Omega=[0, 1]\times [0, 1]$, and $g(X, t)$ is chosen corresponding to the plane wave solution
\begin{equation}\label{54}
  u(X, t)=e^{i(t-2x-2y)}xy(1-x)(1-y).
\end{equation}
\end{example}

In this example, a uniform rectangular partition with $M+1$ nodes in each direction is used in our computation, and the system \eqref{51}-\eqref{53} is numerically solved by the linearized Galerkin method with the bilinear element. On the one hand, taking $\tau=h$, and Tables 1-4 list the numerical
results with respect to the time $t=0.25, 0.5, 0.75, 1.0$. On the other hand, in order to show the unconditional stability, we choose
$h=1/80$, and the large time steps $\tau = h, 5h, 10h, 20h$, respectively. The coresponding results are shown in Table 5.
All the numerical results are concurring with the theoretical results.

 \begin{table}[!h]\label{table1}
\renewcommand\arraystretch{1.5}
 \tabcolsep 0pt
 \caption{Numerical results at $t=0.25$ with $\tau=h$}
 \vspace*{-10pt}
 \begin{center}
 \def\temptablewidth{1\textwidth}
 {\rule{\temptablewidth}{1pt}}
 \begin{tabular*}{\temptablewidth}{@{\extracolsep{\fill}}lllllll}
$M\times M$& $\|u^n-U_h^n\|_1$ & Order & $\|U_h^n-I_hu^n\|_1$ & Order & $\|u^n-I_{2h}^2U_h^n\|_1$ & Order\\   \hline
 $10\times 10$& 5.2551e-02 &- & 8.4988e-03 &-& 1.0060e-01& -\\
   $20\times 20$& 2.6409e-02 &0.9927 & 2.8420e-03  &1.5803& 3.8516e-02& 1.3852\\
   $40\times 40$& 1.3189e-02 &1.0016 & 5.8402e-04  &2.2828& 9.9039e-03& 1.9594\\
   $80\times 80$& 6.5963e-03 &0.9997& 1.4682e-04  &1.9920& 2.4934e-03& 1.9899
       \end{tabular*}
        {\rule{\temptablewidth}{1pt}}
        \end{center}
        \end{table}

 \begin{table}[!h]\label{table2}
\renewcommand\arraystretch{1.5}
 \tabcolsep 0pt
 \caption{Numerical results at $t=0.5$ with $\tau=h$}
 \vspace*{-10pt}
 \begin{center}
 \def\temptablewidth{1\textwidth}
 {\rule{\temptablewidth}{1pt}}
 \begin{tabular*}{\temptablewidth}{@{\extracolsep{\fill}}lllllll}
$M\times M$& $\|u^n-U_h^n\|_1$ & Order & $\|U_h^n-I_hu^n\|_1$ & Order & $\|u^n-I_{2h}^2U_h^n\|_1$ & Order\\   \hline
 $10\times 10$& 5.2622e-02 &- & 8.8880e-03 &-& 1.0004e-01& -\\
   $20\times 20$& 2.6354e-02 &0.9976 & 2.2875e-03  &1.9581& 3.8475e-02&1.3785 \\
   $40\times 40$& 1.3189e-02 &0.9986 & 5.8464e-04  &1.9682&9.9050e-03 & 1.9577\\
   $80\times 80$& 6.5963e-03 &0.9997& 1.4696e-04  &1.9922&2.4934e-03 &1.9900
       \end{tabular*}
        {\rule{\temptablewidth}{1pt}}
        \end{center}
        \end{table}

 \begin{table}[!h]\label{table3}
\renewcommand\arraystretch{1.5}
 \tabcolsep 0pt
 \caption{Numerical results at $t=0.75$ with $\tau=h$}
 \vspace*{-10pt}
 \begin{center}
 \def\temptablewidth{1\textwidth}
 {\rule{\temptablewidth}{1pt}}
 \begin{tabular*}{\temptablewidth}{@{\extracolsep{\fill}}lllllll}
$M\times M$& $\|u^n-U_h^n\|_1$ & Order & $\|U_h^n-I_hu^n\|_1$ & Order & $\|u^n-I_{2h}^2U_h^n\|_1$ & Order\\   \hline
 $10\times 10$& 5.2750e-02 &- & 9.5904e-03 &-&1.0008e-01 & -\\
   $20\times 20$& 2.6408e-02 &0.9982 & 2.8374e-03  &1.7570&3.8498e-02 & 1.3783\\
   $40\times 40$& 1.3189e-02 &1.0016 & 5.8462e-04  &2.2790&9.9045e-03 &1.9586 \\
   $80\times 80$& 6.5963e-03 &0.9997& 1.4695e-04  &1.9921&2.4934e-03 &1.9900
       \end{tabular*}
        {\rule{\temptablewidth}{1pt}}
        \end{center}
        \end{table}

\begin{table}[!h]\label{table4}
\renewcommand\arraystretch{1.5}
 \tabcolsep 0pt
 \caption{Numerical results at $t=1$ with $\tau=h$}
 \vspace*{-10pt}
 \begin{center}
 \def\temptablewidth{1\textwidth}
 {\rule{\temptablewidth}{1pt}}
 \begin{tabular*}{\temptablewidth}{@{\extracolsep{\fill}}lllllll}
$M\times M$& $\|u^n-U_h^n\|_1$ & Order & $\|U_h^n-I_hu^n\|_1$ & Order & $\|u^n-I_{2h}^2U_h^n\|_1$ & Order\\   \hline
 $10\times 10$& 5.2530e-02 &- & 8.3490e-03 &-& 9.9952e-02&- \\
   $20\times 20$& 2.6354e-02 &0.9951 & 2.2877e-03  &1.8677&3.8481e-02 &1.3771 \\
   $40\times 40$& 1.3189e-02 &0.9986 & 5.8462e-04  &1.9683& 9.9047e-03& 1.9580\\
   $80\times 80$& 6.5963e-03 &0.9997& 1.4695e-04  &1.9921& 2.4934e-03& 1.9900
       \end{tabular*}
        {\rule{\temptablewidth}{1pt}}
        \end{center}
        \end{table}

        \begin{table}[!h]\label{table5}
\renewcommand\arraystretch{1.5}
 \tabcolsep 0pt
 \caption{Convergence results of $\|u^n-U_h^n\|_1$ with $h=1/80$ and $\tau=kh$}
 \vspace*{-10pt}
 \begin{center}
 \def\temptablewidth{1\textwidth}
 {\rule{\temptablewidth}{1pt}}
 \begin{tabular*}{\temptablewidth}{@{\extracolsep{\fill}}lllll}
$t$& $k=1$ & $k=5$ & $k=10$ & $k=20$         \\    \hline
 0.25& 6.5963e-03 &6.5969e-03 & 9.5227e-03 &3.4159e-02\\
  0.50&6.5963e-03 &6.5968e-03 & 7.3813e-03 &2.6816e-02 \\
  0.75& 6.5963e-03 &6.5969e-03 & 6.8870e-03 &2.1424e-02 \\
   1.00& 6.5963e-03 &6.5968e-03 & 6.7265e-03 &1.8086e-02
       \end{tabular*}
        {\rule{\temptablewidth}{1pt}}
        \end{center}
        \end{table}

\bibliographystyle{spmpsci}
\bibliography{GLE}
\end{document}